\documentclass{amsart}
\usepackage{lineno,hyperref,paralist,graphicx,epstopdf,float,bookmark,geometry,amssymb,amsmath,amsthm}
\geometry{left=2.9cm, right=2.9cm}
\newtheorem{The}{Theorem}[section]
\newtheorem{Cor}[The]{Corollary}
\newtheorem{Lem}[The]{Lemma}
\newtheorem{Pro}[The]{Proposition}
\theoremstyle{definition}
\newtheorem{defn}[The]{Definition}
\newtheorem*{Rem}{Remark}
\numberwithin{equation}{section}
\newcommand{\h}{\mathbf{H}}
\newcommand{\J}{\mathcal{J}}
\newcommand{\R}{\mathbb{R}}
\newcommand{\Z}{\mathbb{Z}}

\newcommand{\s}{\mathbb{S}^1}
\newcommand{\Q}{\mathbb{Q}}
\newcommand{\M}{\mathrm{M}}
\newcommand{\m}{\mathfrak{M}}
\newcommand{\A}{\mathrm{A}}
\newcommand{\B}{\mathbf{B}}
\newcommand{\an}{\mathbf{A}}
\newcommand{\D}{\mathbf{D}}
\newcommand{\E}{\mathbf{E}}

\begin{document}
\title{Regular dependence of the Peierls barriers on perturbations}
\author{Qinbo Chen$^{\dag}$}
\address{$^{\dag}$\; Department of Mathematics, Nanjing University\\Nanjing, Jiangsu, China, 210093}
\email{qinboChen1990@gmail.com}
\author{Chong-Qing Cheng$^\ddag$}
\address{$^\ddag$\; Department of Mathematics, Nanjing University\\Nanjing, Jiangsu, China, 210093}
\email{chengcq@nju.edu.cn}
\subjclass[2010]{Primary 37Jxx, 70Hxx.}
\keywords{Peierls barrier; twist diffeomorphism; Tonelli Lagrangian; Aubry-Mather theory; invariant circles}
\begin{abstract}
Let $f$ be an exact area-preserving  monotone twist diffeomorphism of the infinite cylinder and $P_{\omega,f}(\xi)$ be the associated Peierls barrier.
In this paper, we give the  H\"{o}lder regularity of $P_{\omega,f}(\xi)$ with respect to the parameter $f$. In fact, we prove that if the rotation
symbol $\omega\in (\R\setminus\Q)\bigcup(\Q+)\bigcup(\Q-)$, then $P_{\omega,f}(\xi)$ is $1/3$-H\"{o}lder continuous in $f$, i.e.
$$|P_{\omega,f'}(\xi)-P_{\omega,f}(\xi)|\leq C\|f'-f\|_{C^1}^{1/3} ,~~\forall \xi\in\R$$
where $C$ is a constant. Similar results also hold for the Lagrangians with one and a half degrees of freedom. As application, we give an open and dense result about the breakup of invariant circles.
\end{abstract}
\maketitle

\section{Introduction}
The Peierls barrier $P_{\omega,f}(\xi)$ for the monotone twist diffeomorphism $f$ is a function which can be thought of as a dislocation energy.
It measures to which extent the stationary configuration $(x_i)_{i\in\Z}$ of rotation symbol $\omega$, subject to the constraint $x_0=\xi$, is
not minimal.  In \cite{MR920622}, Mather established the modulus of continuity for $P_{\omega,f}$  with respect to the parameter $\omega$, and
by applying this property, he gave destruction results for invariant circles under arbitrary small perturbations (\cite{MR967638}). For further
research, we need more information and properties about the Peierls barriers. In this paper,  we prove the H\"{o}lder continuity of {P}eierls
barriers with respect to the parameter $f$, which generalize J. Mather's results in \cite{MR920622}\cite{MR967638}. This paper is organized as follows:
In Section 2 and Section 3, we introduce the definition of Peierls barrier and  some basic properties in Aubry-Mather theory. Our main results
are Theorem \ref{main theorem 1} and Theorem \ref{main theorem 2} in Section 4. In Section 5, we give an open and dense result as an application example.

For convenience, we denote by $\vartheta$ (mod 1) the standard coordinate of $\s=\R/\Z$ and $x$ the corresponding coordinate of its universal cover $\R$. We will let $(\vartheta,y)$ denote the standard coordinates of $\s\times\R$ and $(x,y)$ the  corresponding coordinates of the universal cover $\R\times\R$.
The dynamical properties of exact area-preserving monotone twist diffeomorphisms of an infinite cylinder $\s\times\R$ have been studied by Mather (\cite{MR670747}--\cite{MR1139556}) and by Bangert (\cite{MR945963}). In the following, we refer to these papers for the definitions and results that we'll need.
\subsection{Monotone twist diffeomorphism}
\begin{defn}
We call $f$ an exact area-preserving monotone twist diffeomorphism if $f: \s\times\R \longrightarrow \s\times\R$,
$$(\vartheta,y)\longmapsto(\vartheta',y')$$
is a diffeomorphism  satisfying the following conditions:
\begin{enumerate}[(1)]
  \item $f\in C^1(\s\times\R)$.
  \item The 1-form $y'd\vartheta'-yd\vartheta$ on $\s\times\R$ is exact.
  \item (positive monotone twist) $\frac{\partial\vartheta'(\vartheta,y)}{\partial y}>0$, for all $(\vartheta,y)$.
  \item $f$ twists the cylinder infinitely at either hand. To express this condition, we consider a lift $\bar{f}$ of $f$ to the universal cover $\R\times\R$, $\bar{f}(x,y)=(x',y'),$ the condition means that for fixed $x$,
      $$x'\rightarrow +\infty ~\text{as}~ y\rightarrow +\infty~ \text{and}~ x'\rightarrow -\infty ~\text{as}~ y\rightarrow -\infty.$$

\end{enumerate}
\end{defn}

The positive monotone twist condition has its geometrical meaning. Consider a point $P\in\s\times\R$ and denote by $v_P=(0,1)$ the vertical vector at $P$. Let $\beta_f(P)$ denote the angle between $v_P$ and  $d_Pf \cdot v_P$ (count in the clockwise direction). So the  positive monotone twist condition means that
$$0<\beta_f(P)<\pi$$
everywhere.
\begin{figure}[H]
  \centering
  \includegraphics[width=5cm]{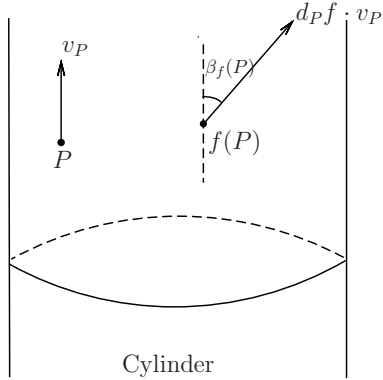}\\
  \caption{geometrical meaning of positive monotone twist condition}
\end{figure}

We denote by $\J$ the class of exact area-preserving monotone twist diffeomorphisms. Let $\J_\beta=\{f\in\J: \beta_f(P)\geq\beta,~\text{for all}~P\in\s\times\R\}$. Although $\bigcup\limits_{\beta> 0}\J_\beta \subsetneqq \J$, most of our results can be generalized to $\J$ without any difficulty. This is because our main results concern what happens in a compact region $K$ of $\s\times\R$. Thus, for all $f\in\J $, there exists $\beta>0$ and $g\in\J_\beta$ such that $f\mid_K=g\mid_K$.
\subsection{The variational principle}
If $f\in\J$ and $\bar{f}$ is a lift of $f$ to $\R\times\R$ such that $\bar{f}(x,y)=(x',y')$, then there exists a $C^2$ generating function $h(x,x'):\R\times\R\longrightarrow\R$ such that $\partial_{12}h:=\frac{\partial^2h(x,x')}{\partial x\partial x'}<0$ and
\begin{equation}\label{generating}
\begin{cases}
y=-\partial_1h(x,x')\\
y'=\partial_2h(x,x')
\end{cases}
\end{equation}
where $\partial_1h(x,x')$ and $\partial_2h(x,x')$ denote the partial differential derivatives of $h$ with respect to $x$ and $x'$.

For $f\in\J$, the generating function satisfies the following conditions $(\h1)-(\h4)$,\\
($\h1$) $h(x,x')=h(x+1,x'+1)$, for all $x,x'\in\R.$\\
($\h2$) $\lim\limits_{|\xi|\rightarrow+\infty}h(x,x+\xi)=+\infty$ uniformly in $x$.\\
($\h3$) If $x<\xi, x'<\xi'$, then $h(x,x')+h(\xi,\xi')<h(x,\xi')+h(\xi,x').$\\
($\h4$) If $(\bar{x},x,x')$ and $(\bar{\xi},x,\xi')$ are both minimal segments (see \S2) and are distinct, then
  $$(\bar{x}-\bar{\xi})(x'-\xi')<0.$$

Moreover, we add two further conditions to $h$ which was firstly introduced by Mather.\\
($\h5$) There exists a positive continuous function $\rho:\R^2\rightarrow\R$ such that for $x<\xi$ and $x'<\xi'$,
  $h(x,\xi')+h(\xi,x')-h(x,x')-h(\xi,\xi')\geq \int^\xi_x \int^{\xi'}_{x'}\rho(s,s')dsds'.$\\
($\h6\theta$) There exists a positive number $\theta$ such that
$$x\mapsto\theta(x-x')^2/2-h(x,x') ~\text{is convex},~\text{for any}~ x',$$
$$x'\mapsto\theta(x'-x)^2/2-h(x,x') ~\text{is convex},~\text{for any}~ x.$$

Conditions $(\h1)-(\h4)$ were firstly introduced by Bangert in \cite{MR945963}, and conditions $(\h3)(\h4)$ can be implied by conditions $(\h5)(\h6)$ (see \cite{MR920622}). For $f\in\J_\beta$, it's not hard to observe that we can take $\rho=-\partial_{12}h$ in $(\h5)$, the inequality ``$\geq$'' can be replaced by ``='', and $(\h6\theta)$ is satisfied with $\theta=\cot\beta~(0<\beta<\pi/2).$

Notice that if $h$ is a generating function, $h+C$ is still a generating function. In next section, we'll introduce some basic results on the theory of minimal configurations, which was developed by Aubry and Le Daeron \cite{MR719634} and Mather. However, Bangert generalized this theory where $h$ is not necessary differentiable and only satisfies $(\h1)-(\h4)$ (see \cite{MR945963}).

\section{Minimal configurations and Peierls barriers}
\subsection{Minimal configurations}
In Bangert's set up, the variational principle $h$ need not be differentiable, which is very useful for us in our proof of Theorem \ref{main theorem 1} and Theorem \ref{main theorem 2}. Therefore, in this section, unless otherwise specified, we assume that $h$ is only continuous and satisfies the conditions $(\h1)-(\h4)$.
\begin{defn}
Let $\R^Z=\{x|x:\Z\rightarrow\R\}$ be  bi-infinite sequences of real numbers with the product topology and let $x=(x_i)_{i\in\Z}$ be an element in $\R^Z$. We extend $h$ to a finite segment $(x_j,...,x_k), j<k$,
$h(x_j,...,x_k):=\sum\limits_{i=j}^k h(x_i,x_{i+1})$. The segment $(x_j,...,x_k)$ is called \emph{ minimal segment} with respect to $h$ if
$$h(x_j,...,x_k)\leq h(y_j,...,y_k),$$
for all $(y_j,...,y_k)$ with $y_j=x_j$ and $y_k=x_k$. $x=(x_i)_{i\in\Z}$ is called \emph{minimal configuration} if every finite segment of $x$ is minimal.
\end{defn}

We denote by $\M:=\M_h$ the set of all minimal configurations of $h$.

\begin{defn}
If $h\in C^2$, a segment $(x_j,...,x_k)$ is called \emph{stationary }if
$$\partial_2h(x_{i-1},x_i)+\partial_1h(x_i,x_{i+1})=0,~\text{for all}~j<i<k.$$
\end{defn}
\begin{Rem}
The stationary configurations $(...,x_i,...)$ of $h$ correspond to the orbits $(...,(x_i,y_i),...)$ of $\bar{f}$,
where $y_i=-\partial_1h(x_i,x_{i+1})=\partial_2h(x_{i-1},x_i)$.
\end{Rem}
Thus, the minimal configurations $\M_h$ of $h$ correspond to a class of minimal orbits of the lift $\bar{f}$ of $f$,  we denote by $\m:=\m_{\bar{f}}$ the set of all such minimal orbits. In the following, we will give some dynamical properties for the minimal configurations for $h$, or equivalently, the minimal orbits of $\bar{f}$. For proofs, we refer to \cite{MR920622}.

Given a configuration $x=(x_i)_{i\in\Z}$, we join $(i,x_i)$ and $(i+1,x_{i+1})$ by a line segment in $\R^2$, the union of all such line segments is a piecewise linear curve in $\R^2$, which we call \emph{the Aubry graph} of $x$. We say that configurations $x$ and $x^*$ \emph{cross} if their Aubry graph cross. We say $x<x^*$ if $x_i<x^*_i$ for all $i$. Similarly, we can define $x>x^*$ and $x=x^*$. We say that $x$ and $x^*$ are \emph{comparable} if $x<x^*$ or $x=x^*$ or $x>x^*$. By condition $(\h4)$, we know that any two minimal configurations either cross or are comparable.
Moreover, we say $x$ and $x^*$ are \emph{$\omega-asymptotic$} (resp. \emph{$\alpha-asymptotic$}) if $\lim\limits_{i\rightarrow+\infty}|x_i-x^*_i|=0$ (resp. $\lim\limits_{i\rightarrow-\infty}|x_i-x^*_i|=0$).

\begin{Lem}[\textup{\cite{MR945963}}, Aubry's Crossing Lemma]\label{Aubry crossing lemma}
Let $x$ and $x^*$ be $h$-minimal configurations, then they cross at most once. If $x$ and $x^*$ coincide at some $i\in\Z$, i.e. $x_i=x_i^*$, then they cross at $i$.
\end{Lem}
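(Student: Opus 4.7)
The plan is to divide the lemma into two parts. The first statement---two distinct minimal configurations cross at most once---will be handled by the classical Aubry swap argument, with a subtle extra step required when both crossings occur at integer coincidences. The second statement---that any integer coincidence is itself a crossing---will fall out directly from condition $(\h4)$.

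For the first statement, I argue by contradiction: assume $x$ and $x^*$ cross at least twice, and choose a window $[a,b]$ enclosing two consecutive such crossings. I then define the pointwise extremes $y_k=\min(x_k,x^*_k)$ and $z_k=\max(x_k,x^*_k)$. Depending on which of $x,x^*$ lies above near each endpoint of the window, the boundary values $y_a,y_b,z_a,z_b$ coincide with the corresponding boundary values of either $x$ or $x^*$; applying the minimality of $x$ and $x^*$ to the competitors with those matching boundary values gives $h(x_a,\dots,x_b)+h(x^*_a,\dots,x^*_b)\le h(y_a,\dots,y_b)+h(z_a,\dots,z_b)$. I evaluate the right-hand side bond by bond: bonds on which $x$ and $x^*$ stay on the same side contribute the same as on the left, while any bond $(i,i+1)$ carrying a strict non-integer crossing, i.e. $x_i-x^*_i$ and $x_{i+1}-x^*_{i+1}$ are nonzero of opposite sign, contributes a strictly smaller quantity by $(\h3)$. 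Hence as soon as at least one of the two crossings is a strict non-integer crossing, the strict drop contradicts the minimality bound and the argument closes.

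The only surviving case is that both crossings are integer coincidences $x_{i_1}=x^*_{i_1}$ and $x_{i_2}=x^*_{i_2}$. At such a coincidence bond, one endpoint is shared by $x$ and $x^*$, so $(\h3)$ collapses and the bond-by-bond computation yields only equality. Combining this equality with the minimality bound forces $h(y_{i_1-1},\dots,y_{i_2+1})=h(x^*_{i_1-1},\dots,x^*_{i_2+1})$, so $y$ is itself a minimal segment on $[i_1-1,i_2+1]$, and every three-point sub-segment of $y$ is therefore minimal as well. In particular the triple $(x^*_{i_1-1},x_{i_1},x_{i_1+1})$ is a minimal three-segment, as is $(x_{i_1-1},x_{i_1},x_{i_1+1})$ by the minimality of $x$; both share the middle value $x_{i_1}$ and are distinct because $x^*_{i_1-1}\ne x_{i_1-1}$. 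But the product $(x^*_{i_1-1}-x_{i_1-1})(x_{i_1+1}-x_{i_1+1})$ then equals $0$ rather than being strictly negative, contradicting $(\h4)$.

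For the second statement, from $x_i=x^*_i$ the three-segments $(x_{i-1},x_i,x_{i+1})$ and $(x^*_{i-1},x_i,x^*_{i+1})$ are minimal by virtue of $x$ and $x^*$ being minimal, and share the middle value $x_i$; provided they are distinct, $(\h4)$ immediately yields $(x_{i-1}-x^*_{i-1})(x_{i+1}-x^*_{i+1})<0$, which is exactly the condition that the Aubry graphs cross transversally at $(i,x_i)$. The main obstacle throughout is the double-coincidence subcase of the first part: the plain swap loses its strict decrease, and the added step of upgrading $y$ to an honest minimal segment and then invoking $(\h4)$ on one of its three-point sub-segments is what actually carries the argument through.
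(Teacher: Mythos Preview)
The paper does not actually prove this lemma: it is cited from Bangert, and the only comment is the sentence ``Notice that Aubry's Crossing Lemma can be proved by condition $(\h3)$'' (in fact both $(\h3)$ and $(\h4)$ are needed, exactly as your argument uses them). Your proposal is the classical Aubry--Bangert swap proof and is essentially correct.

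Two small points deserve tightening. First, in the double-coincidence case you identify the minimal triple in $y$ at $i_1$ as $(x^*_{i_1-1},x_{i_1},x_{i_1+1})$; for this to equal $(y_{i_1-1},y_{i_1},y_{i_1+1})$ you are implicitly using the convention $x^*<x$ just to the left of $i_1$ and $x^*>x$ just to the right, which is consistent with your equality $h(y_{i_1-1},\dots,y_{i_2+1})=h(x^*_{i_1-1},\dots,x^*_{i_2+1})$ but should be stated. You also need $x_{i_1-1}\neq x^*_{i_1-1}$ so that the two three-segments fed into $(\h4)$ are genuinely distinct; this is guaranteed once $i_1$ is chosen as the leftmost index of a maximal block of consecutive coincidences, a routine reduction worth making explicit. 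Second, in the proof of the second assertion you write ``provided they are distinct''; if the two three-segments happen to coincide one simply iterates to the neighbouring indices, eventually either finding a distinct pair (and applying $(\h4)$) or concluding $x=x^*$, in which case the statement is vacuous. With these adjustments your argument is complete and matches the standard literature proof.
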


Notice that Aubry's Crossing Lemma can be proved by condition $(\h3)$. If $x$ is a minimal configuration, then
$$\rho(x):=\lim\limits_{n\rightarrow +\infty}x_n/n$$ exists. The number $\rho(x)$ is called \emph{the rotation number} of $x$.
In addition, the rotation function $\rho:\M_h\rightarrow\R$ is continuous and surjective, and $(pr_0,\rho):\M_h\rightarrow \R\times\R$ is proper,
where $pr_0(x)=x_0$.

If $x\in\M_h$ and $\rho(x)=p/q$, where $q>0$ and $p,q$ are relatively prime integers, then $x$ must satisfy one of the three relations (see \cite{MR920622}):
\begin{enumerate}[(a)]
  \item $x_{i+q}>x_i+p,$ for all $i$.
  \item $x_{i+q}=x_i+p,$ for all $i$.
  \item $x_{i+q}<x_i+p,$ for all $i$.
\end{enumerate}
This leads us to introduce the \emph{symbol space} $S=(\R\setminus\Q)\bigcup(\Q-)\bigcup(\Q)\bigcup(\Q+)$, where $\Q+$ denotes the set of all symbols $\frac{p}{q}+$ and $\Q-$ is defined similarly. The symbol space has an obvious order so that $\frac{p}{q}-<\frac{p}{q}<\frac{p}{q}+$. We provide $S$ with the order topology, i.e. the set of intervals ($s_1, s_2$)=$\{x: s_1<x<s_2\}$ is a basis for this topology. We also define the projection map $\pi:S\rightarrow\R$,
\begin{equation}\label{underlying number}
  \pi(\omega)\triangleq
  \begin{cases}
  \omega , & \omega\in(\R\setminus\Q)\\
  \frac{p}{q} , & \omega=\frac{p}{q}\pm ~or ~~\frac{p}{q}
  \end{cases}
\end{equation}
Obviously, the map $\pi$ is weakly order preserving. For more details, see (\cite{MR920622}, \S 3).

From now on, if $\omega\in\R\setminus\Q$, we denote by $\M_\omega:=\M_{\omega,h}$ the set of $x\in\M_h$ with $\rho(x)=\omega$. Let $\M_{p/q}:=\M_{p/q,h}$ denotes the set of $x\in\M_h$ with $\rho(x)=p/q$ and $(b)$ holds. Let $\M_{p/q^-}:=\M_{p/q^-,h}$ denotes the set of $x\in\M_h$ with $\rho(x)=p/q$ and $(b)$ or $(c)$ hold. Similarly, we also denote by $\M_{p/q^+}:=\M_{p/q^+,h}$ the set of $x\in\M_h$ with $\rho(x)=p/q$ and $(a)$ or $(b)$ hold. Notice that for each $\omega\in S$, $\M_{\omega,h}$ is a non-empty closed set and totally ordered.

Equivalently, for the rotation symbol $\omega\in S$, we can denote by $\m_\omega:=\m_{\omega,\bar{f}}$ the set of all $\omega$-minimal orbits of $f\in\J$.
\begin{Pro}\textup{(\cite{MR945963})}\label{rotation estimates}
Let $x=(x_i)_{i\in\Z}\in \M_{\omega,h}$ be a minimal configuration, we have that
$$|x_{i+j}-x_{i}-j\pi(\omega)|<1$$
for all $i<i+j\in\Z$.
\end{Pro}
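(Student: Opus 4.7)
The plan is to exploit the $\Z^2$-equivariance of $\M_{\omega,h}$ together with the total ordering stated just above. Condition $(\h1)$ implies that for any $(m,n)\in\Z^2$ and any $x\in\M_{\omega,h}$, the translated configuration $T_{m,n}x$ defined by $(T_{m,n}x)_k := x_{k-m}+n$ is again $h$-minimal with the same rotation symbol $\omega$: minimality, the rotation number $\pi(\omega)$, and the trichotomy (a)/(b)/(c) are all preserved under integer translations of the Aubry graph. Since $\M_{\omega,h}$ is totally ordered, $T_{j,n}x$ and $x$ are comparable in the strict pointwise sense: exactly one of $T_{j,n}x<x$, $T_{j,n}x=x$, $T_{j,n}x>x$ holds, where the strict inequalities are understood at every index. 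The decisive consequence is that this global comparison is detected by inspecting any single index.

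For the upper bound $x_{i+j}-x_i<j\pi(\omega)+1$, I would argue by contradiction. Assume $x_{i+j}-x_i\geq j\pi(\omega)+1$ and choose the smallest integer $n$ with $n>j\pi(\omega)$, so that automatically $n\leq j\pi(\omega)+1$. Evaluating at the index $i+j$ gives
\[
(T_{j,n}x)_{i+j}\;=\;x_i+n\;\leq\;x_i+j\pi(\omega)+1\;\leq\;x_{i+j},
\]
so the alternative $T_{j,n}x>x$ is ruled out. If $T_{j,n}x=x$, then $x_{k+j}=x_k+n$ for every $k$ and hence $\rho(x)=n/j>\pi(\omega)$; if instead $T_{j,n}x<x$, then $x_{k+j}-x_k>n$ for every $k$, which upon iteration yields $x_{k+mj}-x_k>mn$ and therefore $\rho(x)\geq n/j>\pi(\omega)$. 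Either subcase contradicts $\rho(x)=\pi(\omega)$. The lower bound $x_{i+j}-x_i>j\pi(\omega)-1$ is obtained by the mirror argument, taking $n$ to be the largest integer strictly less than $j\pi(\omega)$ and arguing $\rho(x)\leq n/j<\pi(\omega)$ in the contrary hypothesis.

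The only subtle point is the boundary case $j\pi(\omega)\in\Z$, which occurs precisely when $\omega\in\{p/q^-,\,p/q,\,p/q^+\}$ and $q\mid j$. In that situation the forced choice is $n=j\pi(\omega)+1$ and the single-index comparison becomes an equality, so one must actively exclude the possibility $T_{j,n}x=x$. I expect this to be the main technical nuisance, but it dissolves into the same rotation-number accounting: $T_{j,n}x=x$ would force $\rho(x)=n/j=\pi(\omega)+1/j\neq\pi(\omega)$. Thus the argument ultimately invokes only the total ordering of $\M_{\omega,h}$ and the identity $\rho(x)=\pi(\omega)$; the finer distinction among the symbols $p/q^-$, $p/q$, $p/q^+$ is never needed.
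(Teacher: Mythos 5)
The paper cites Bangert (\cite{MR945963}) for this proposition and does not reproduce a proof, so there is no in-text argument to compare against; your argument is the standard one used by Bangert (via the Birkhoff property of minimal configurations) and it is correct. The three essential ingredients you invoke are all available in the paper: $(\h1)$ guarantees that each $T_{m,n}x$ is again $h$-minimal with the same rotation symbol, the total ordering of $\M_{\omega,h}$ forces each translate to be globally $<$, $=$, or $>$ the original $x$, and in every branch the induced rotation number $n/j$ lies on the wrong side of $\pi(\omega)$, giving a contradiction; the mirror argument yields the lower bound.

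One small remark: the paragraph on the ``boundary case $j\pi(\omega)\in\Z$'' is not really a separate case. Whether the single-index comparison $(T_{j,n}x)_{i+j}\le x_{i+j}$ is an equality or a strict inequality, the total ordering of $\M_{\omega,h}$ leaves only the alternatives $T_{j,n}x=x$ or $T_{j,n}x<x$, and your main argument already disposes of both. So the dichotomy between $j\pi(\omega)\in\Z$ and $j\pi(\omega)\notin\Z$ plays no role and could be dropped without loss. Apart from that redundancy the proposal is complete and sound.
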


\begin{Pro}\textup{(\cite{MR1384394})}\label{bangert}
Let $(y_0,...,y_n)$ be a minimal segment, there exists $\alpha\in\R$ so that
$$|y_{i+j}-y_i-j\alpha|<2$$
for all $0\leq i\leq i+j\leq n.$
\end{Pro}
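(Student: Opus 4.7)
The plan is to locate a minimal configuration $z$ with rotation number $\alpha := (y_n - y_0)/n$ that tracks the segment $y$ within a common unit interval on $[0, n]$, and then combine the straight-line bound for $z$ from Proposition~\ref{rotation estimates} with this alignment to obtain the required estimate.

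I first set $\alpha := (y_n - y_0)/n$. By Aubry--Mather theory (augmented when necessary by the heteroclinic minimal configurations filling the gaps of a Cantor Mather set), one selects a minimal configuration $z$ of rotation number $\alpha$ satisfying $y_0 - z_0 \in [0, 1)$ and $y_n - z_n \in [0, 1)$. Such a $z$ is produced by an intermediate-value argument along the totally ordered family of minimal configurations of rotation $\alpha$: the map $z_0 \mapsto z_n$ is monotone and $\Z$-equivariant, i.e.\ $z_n(z_0 + 1) = z_n(z_0) + 1$, and the discrepancy $z_n - z_0 - n\alpha$ stays in $(-1, 1)$ by Proposition~\ref{rotation estimates}; so as $z_0$ sweeps through one $\Z$-period the pair $(y_0 - z_0, \; y_n - z_n)$ sweeps through one period in each coordinate and must meet the unit square $[0, 1)^2$.

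I next apply Aubry's Crossing Lemma (Lemma~\ref{Aubry crossing lemma}, extended by the standard exchange argument based on condition $(\h3)$ to the case of a minimal segment versus a minimal configuration) to $y$ and each integer translate $z + k$, $k \in \Z$. This forces, for every such $k$, the level set $\{i \in [0, n] : y_i - z_i \geq k\}$ to be either an initial segment or a terminal segment of $[0, n]$. A brief combinatorial check then shows that $k_i := \lfloor y_i - z_i \rfloor$ is a monotone function of $i$ on $[0, n]$: any non-monotone triple $i_1 < i_2 < i_3$ would produce a ``peak'' or ``valley'' value of $k_i$ whose level set would contain $i_2$ but omit both $i_1$ and $i_3$, contradicting the initial/terminal structure. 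Combined with the alignment $k_0 = 0 = k_n$ from the previous step, monotonicity forces $k_i \equiv 0$, so $y_i - z_i \in [0, 1)$ for every $i \in \{0, 1, \dots, n\}$.

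Finally, I decompose
\[
 y_{i+j} - y_i - j\alpha \;=\; \bigl((y_{i+j} - z_{i+j}) - (y_i - z_i)\bigr) + (z_{i+j} - z_i - j\alpha).
\]
The first bracket is the difference of two numbers in $[0, 1)$, hence lies in $(-1, 1)$; the second lies in $(-1, 1)$ by Proposition~\ref{rotation estimates} applied to $z$ (noting $\pi(\omega) = \alpha$ whichever rotation symbol $\omega$ with $\pi(\omega) = \alpha$ is realized by $z$). Adding these gives $|y_{i+j} - y_i - j\alpha| < 2$, as required. The principal obstacle is the alignment step: securing a single $z$ with both endpoint offsets in $[0, 1)$, which in the non-foliated (Cantor) case depends on careful use of the heteroclinic minimal configurations supplied by the Mather--Bangert theory to fill in what would otherwise be a gap in the parameterization used by the intermediate-value argument.
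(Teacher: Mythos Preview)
The paper does not prove this proposition at all; it merely cites Bangert's paper \cite{MR1384394}. So there is no in-house argument to compare against, and the question is simply whether your proof is correct.

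Your steps 3 and 4 are fine: the exchange argument based on $(\h3)$ does extend the crossing lemma to a minimal segment versus a minimal configuration, the monotonicity of $k_i=\lfloor y_i-z_i\rfloor$ follows, and once $k_i\equiv 0$ the decomposition in step 4 gives the bound $<2$. The problem is exactly where you say it is, and it is not resolved by your hand-wave about heteroclinics.

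Your intermediate-value argument in step 2 requires that $z_0$ sweep through a full unit interval, i.e.\ that the minimal configurations of rotation number $\alpha$ project onto all of $\R$. But by definition $A_{\alpha,h}=pr_0(\M_{\alpha,h})$, and this set equals $\R$ precisely when the Peierls barrier $P_\alpha$ vanishes identically, i.e.\ when there is an invariant circle of rotation number $\alpha$. In the interesting (Cantor) case $A_{\alpha,h}$ is a proper closed subset of $\R$; the homoclinic/heteroclinic minimal configurations are already contained in $\M_{\alpha,h}$ and do \emph{not} fill the remaining gaps (a point $\xi$ in a gap has $P_\alpha(\xi)>0$, so no $z\in\M_{\alpha,h}$ has $z_0=\xi$). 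With $z_0$ ranging only over a Cantor set, the map $z_0\mapsto z_n$ has jumps, and one can write down monotone $\Z$-equivariant self-maps of a closed $\Z$-invariant set satisfying $|z_n-z_0-n\alpha|<1$ for which no $z_0$ lands simultaneously in both target unit intervals. So the existence of a single $z$ with $y_0-z_0\in[0,1)$ and $y_n-z_n\in[0,1)$ is not established. Without it, your monotonicity argument only gives $|k_n-k_0|\le 1$, hence $|(y_i-z_i)-(y_j-z_j)|<2$, and the final decomposition yields $|y_{i+j}-y_i-j\alpha|<3$ rather than $<2$. To recover the sharp constant you must either supply a genuine argument producing such a $z$ in the Cantor case (this requires more than the heteroclinics), or follow Bangert's original route, which organises the comparison differently.
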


\subsection{Peierls barriers}
By the totally ordered property, the projection $pr_0: \M_{\omega,h}\rightarrow \R$, $pr_0(x)=x_0$ is a homeomorphism of $\M_{\omega,h}$ onto its image. We denote by $\A_{\omega,h}=pr_0(\M_{\omega,h})$, then it's a closed subset of $\R$ and invariant under the translation $x\mapsto x+1$ (see \cite {MR920622}). Now, we begin to introduce the definition of the Peierls barrier function $P_\omega(\xi):=P_{\omega,h}(\xi)$, it measures to which extent the stationary configuration $(y_i)_{i\in\Z}$, subject to the condition $y_0=\xi$, is not minimal.

Besides Bangert's conditions $(\h1)-(\h4)$, we will assume $h$ satisfies $(\h5)$ and $(\h6\theta)$.

Since $\R\setminus \A_{\omega,h}$ is an open set, it is a union of open intervals  $(J_-,J_+)$, where $J_-, J_+\in \A_{\omega,h}$.  \emph{The Peierls barrier }is defined as follows. Let $\omega\in S$ and $\xi\in \R$, in the case $\xi\in \A_{\omega,h}$, we define $P_\omega(\xi)=0$.

In the case $\xi\notin\A_{\omega,h}$, $\xi$ belongs to a complementary interval $(J_-,J_+)$ to $\A_{\omega,h}$. Denote by $x^{\pm}$ the minimal configurations of rotation symbol $\omega$ satisfying $x_0^{\pm}=J_{\pm}$ and let
\begin{equation*}
  I=\begin{cases}
  \Z & \omega\in (\R\setminus\Q)\bigcup(\Q+)\bigcup(\Q-)\\
  {0,...,q-1} & \omega=p/q ,
  \end{cases}
\end{equation*}
 we define
 \begin{equation}\label{the definiton of barrier}
 P_\omega(\xi)=\min\{ \sum\limits_Ih(y_i,y_{i+1})-h(x_i^-,x_{i+1}^-)| y_0=\xi \},
 \end{equation}
where the minimum is taken over the set of all configurations satisfying $x_i^-\leq y_i\leq x_i^+, \forall i\in\Z$ and in the case $\omega=p/q$, the minimum is taken under the additional periodicity constraint $y_{i+q}=y_i+p$.

Notice that $P_\omega(\xi)$ is well defined and finite, since $h$ is Lipschitz and
$$0\leq\sum\limits_{i\in I} y_i-x_i^-\leq\sum\limits_{i\in I} x_i^+-x_i^-\leq 1$$(see \cite{MR945963}).
It is worth pointing out that $P_\omega(\xi)$ is non-negative and 1-periodic, i.e.
$$P_\omega(\xi+1)=P_\omega(\xi).$$

Now, let's recall some important properties of the Peierls barrier which are important for our main results in Section 4.

\begin{Pro}\textup{ (\cite{MR920622},\cite{MR967638})}\label{continuous of modulus}
 Let $h$ be a continuous real valued function satisfying $(\h1)-(\h5)$ and $(\h6\theta)$, then
\begin{enumerate}[(1)]
  \item $|P_{\frac{p}{q}}(\xi)-P_\omega(\xi)|\leq 1200\theta(\frac{1}{q}+|\pi(\omega)q-p|),$
  \item $|P_{\frac{p}{q}+}(\xi)-P_\omega(\xi)|\leq 4800\theta(|\pi(\omega)q-p|)$ in the case $\omega\geq\frac{p}{q}+$ and \\
  $|P_{\frac{p}{q}-}(\xi)-P_\omega(\xi)|\leq 4800\theta(|\pi(\omega)q-p|)$ in the case $\omega\leq\frac{p}{q}-$,
\end{enumerate}
where $\pi$ is (\ref{underlying number}).
\end{Pro}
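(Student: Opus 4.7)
The plan is to prove both inequalities by the direct method: build explicit near-optimal competitors for the relevant variational problems and bound the resulting action differences by means of the quadratic convexity condition $(\h6\theta)$. The mechanism I would use repeatedly is that $(\h6\theta)$ forces $h$ to look like $\theta(x-x')^2/2$ minus a convex function in each variable, giving both a Lipschitz bound of order $\theta$ on $\partial_1 h, \partial_2 h$ on bounded sets and a quadratic action comparison: once the linear terms are eliminated at critical configurations, the difference of actions over a window is bounded by $\theta$ times the sum of squared pointwise deviations of the two configurations.

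For part (1), fix $\xi$ and let $y$ be a minimizer realizing $P_\omega(\xi)$, trapped between the bounding $\omega$-minimizers $x^{\pm,\omega}$. I would cut $y$ on a window and paste the shift $(y_i + p)$ across a transition block of length $\sim q$, invoking Proposition \ref{bangert} to guarantee that the mismatch at the cut is of order $|\pi(\omega)q - p|$. A small rigid adjustment restores the anchor $\tilde y_0 = \xi$. The result is a $(p,q)$-periodic competitor for $P_{p/q}(\xi)$, and its action increment splits into a \emph{drift cost} of order $\theta\,|\pi(\omega)q - p|$ accumulated linearly along the window and a \emph{transition cost} of order $\theta/q$ from the interpolation block, giving one direction of (1). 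Exchanging the roles of $\omega$ and $p/q$ and running the mirror construction furnishes the opposite direction.

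For part (2), the bounding configurations $x^{\pm, p/q^\pm}$ are heteroclinic rather than periodic, so the minimum defining $P_{p/q^\pm}$ is already taken over bi-infinite configurations; the gluing step then needs no periodic transition, the $1/q$ boundary contribution disappears, and only the linear drift term $|\pi(\omega)q - p|$ survives, yielding the sharper bound (2) at the price of a larger constant. Admissibility of every constructed competitor in \eqref{the definiton of barrier} is maintained throughout by Aubry's Crossing Lemma (Lemma \ref{Aubry crossing lemma}) and the totally ordered structure of $\M_{\omega^*,h}$ for the relevant rotation symbols $\omega^*$.

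The main difficulty lies in the cut-and-glue construction underlying step 2: the transplanted configuration must simultaneously (a) remain sandwiched pointwise between $x^-$ and $x^+$, (b) hit $\xi$ at index $0$, and (c) not distort $y$ beyond what the quadratic bound of $(\h6\theta)$ can absorb. The cut must be placed adaptively inside a region where $y_{i+q}$ and $y_i + p$ are as close as Proposition \ref{bangert} allows. A separate case arises when $\xi \in \A_{p/q,h}$ but $\xi \notin \A_{\omega,h}$ (or the reverse), so that one side of the inequality is zero; there one bounds the non-vanishing barrier directly by applying the quadratic comparison to a rigid modification of the rational minimizer through $\xi$, again absorbing the cost into $\theta(1/q + |\pi(\omega)q-p|)$.
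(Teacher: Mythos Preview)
The paper does not give its own proof of this proposition: it simply cites Mather's original papers \cite{MR920622} (Theorem~7.1) for part~(1) and \cite{MR967638} (Theorem~2.2) for part~(2), adding only the remark that ``the proof strongly relies on Aubry's Crossing Lemma.'' So there is no detailed argument here to compare your proposal against line by line.

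That said, your sketch is broadly in the spirit of Mather's actual arguments but departs from them in one structural respect worth flagging. Mather does not work directly with a cut-and-glue on a bi-infinite $\omega$-minimizer; instead he first passes through the conjunction $H_{(q,p)}=h^{*q}(\,\cdot\,,\,\cdot+p)$ (the device recalled in \S3 of the present paper) to reduce the comparison $P_{p/q}$ versus $P_\omega$ to a comparison at rotation symbol $0$ versus $q\omega-p$, and then exploits the order structure and Aubry's Crossing Lemma to control the action differences between trapped configurations. The $1/q$ term in (1) arises there as a boundary/endpoint cost in a finite periodic window of length~$q$, and the disappearance of that term in (2) comes from the one-sided limit structure of $\M_{p/q^\pm}$, much as you say. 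Your direct cut-and-paste route can in principle be made to work, but the step ``a small rigid adjustment restores the anchor $\tilde y_0=\xi$'' while simultaneously keeping the competitor sandwiched between $x^{-}$ and $x^{+}$ is exactly where Mather's proof needs the full force of the ordered structure and Aubry's lemma, not just as an afterthought for admissibility; in your write-up this is where the argument is thinnest, and you should expect most of the technical work (and the origin of the numerical constant) to live there.
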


The proof strongly relies on Aubry's Crossing Lemma. Proposition \ref{continuous of modulus} (1) was firstly proved by Mather in (\cite{MR920622}, Theorem 7.1). The proof of (2) could be found in \cite{MR967638}, Theorem 2.2, where the author proved that if $\omega\geq\frac{p}{q}+$, then
 $$|P_{\frac{p}{q}+}(\xi)-P_\omega(\xi)|\leq 4C\theta(|\pi(\omega)q-p|),$$
where the constant $C=1200$. Similarly,  if $\omega\leq\frac{p}{q}-$, then
$$|P_{\frac{p}{q}-}(\xi)-P_\omega(\xi)|\leq 4C\theta(|\pi(\omega)q-p|),$$
where $C=1200$.

\begin{Cor}\label{continuous at irrational number}\textup{(\cite{MR920622})}
The map $\omega\mapsto P_{\omega,h}(\xi)$ is continuous at any irrational number $\omega$, uniformly in $\xi$.
\end{Cor}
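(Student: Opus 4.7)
The plan is a straightforward two-step approximation using a well-chosen rational $p/q$ as an intermediate, with both comparisons coming from Proposition \ref{continuous of modulus}(1). Fix an irrational $\omega$ and $\epsilon>0$. By Dirichlet's theorem on Diophantine approximation, since $\omega\in\R\setminus\Q$, there exist coprime integers $p,q$ with $q$ arbitrarily large satisfying $|q\omega-p|<1/q$. I choose $q$ so large that $3600\theta/q<\epsilon/2$. Then Proposition \ref{continuous of modulus}(1) applied to $\omega$ gives
$$|P_\omega(\xi)-P_{p/q}(\xi)|\le 1200\theta\bigl(1/q+|q\omega-p|\bigr)<2400\theta/q<\epsilon/2,\qquad\forall\,\xi\in\R.$$

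The second step compares $P_{p/q}$ with $P_{\omega'}$ for a nearby symbol $\omega'\in S$. Since $\omega$ is irrational, the order-topology neighborhood of $\omega$ cut out by the (irrational) endpoints $\omega\pm 1/q^2$ is precisely $\{\omega'\in S:|\pi(\omega')-\omega|<1/q^2\}$. For such $\omega'$ one estimates $|q\pi(\omega')-p|\le q|\pi(\omega')-\omega|+|q\omega-p|<2/q$, so a second application of Proposition \ref{continuous of modulus}(1), this time to $\omega'$, yields
$$|P_{\omega'}(\xi)-P_{p/q}(\xi)|\le 1200\theta\bigl(1/q+|q\pi(\omega')-p|\bigr)<3600\theta/q<\epsilon/2,\qquad\forall\,\xi\in\R.$$
Combining the two bounds via the triangle inequality gives $|P_\omega(\xi)-P_{\omega'}(\xi)|<\epsilon$ uniformly in $\xi$, which is exactly the continuity of $\omega\mapsto P_{\omega,h}(\xi)$ at $\omega$ uniformly in $\xi$.

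There is essentially no substantive obstacle here: Proposition \ref{continuous of modulus}(1) does the analytical work, and irrationality of $\omega$, through Dirichlet's theorem, supplies the simultaneous smallness of $1/q$ and $|q\omega-p|$ that the modulus estimate requires. The only small technicality is translating between the order topology on the symbol space $S$ and the real-line distance via the projection $\pi$, which is painless at an irrational point. Notice also that the argument transparently fails at a rational $\omega=p/q$, since the $1/q$ term in the estimate cannot be driven to zero — in agreement with the well-known discontinuity of Peierls barriers at rationals.
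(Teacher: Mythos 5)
Your proof is correct and is the standard derivation the paper intends: apply Proposition~\ref{continuous of modulus}(1) twice through a Dirichlet rational $p/q$, with $1/q$ and $|q\omega-p|$ made simultaneously small precisely because $\omega$ is irrational. The paper gives no separate proof (it only cites Mather and positions the statement as a corollary of Proposition~\ref{continuous of modulus}), and your argument, including the translation between the order topology on $S$ and real-line distance at an irrational point, is exactly the intended one.
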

\begin{Rem}
In general, $\omega\mapsto P_{\omega,h}$ is not continuous at rational symbol $\omega=p/q$.
\end{Rem}

\section{The conjunction operation}
The reason for not restricting our attention to $C^2$ function $h$ is that we consider not only  the generating functions with respect to $f\in\J$, but also the class of functions generated by the conjunction operation. Let $h_1$ and $h_2 :\R^2\rightarrow\R$ be two continuous functions satisfying the conditions $(\h1)-(\h5)$ and $(\h6\theta)$, \emph{ the conjunction} is defined in the following way:
$$h_1*h_2(x,x')=\min\limits_yh_1(x,y)+h_2(y,x').$$

Notice that even when both $h_1$ and $h_2$ are smooth,  the conjunction $h_1*h_2$ need not be smooth. However, we still have the following useful property.

\begin{Pro}\textup{(\cite{MR920622})}\label{conjunction}
If $h_1$ and $h_2 :\R^2\rightarrow\R$ are two continuous functions satisfying the conditions $(\h1)-(\h5)$ and $(\h6\theta)$, then $h_1*h_2$ satisfies $(\h1)-(\h5)$ and $(\h6\theta)$ with the same $\theta$.
\end{Pro}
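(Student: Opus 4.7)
The plan is to verify the six conditions $(\h1)$--$(\h5)$ and $(\h6\theta)$ separately for $H := h_1 * h_2$. Condition $(\h1)$ is immediate: the substitution $y \mapsto y+1$ in the defining infimum, combined with $(\h1)$ for each $h_i$, yields $H(x+1, x'+1) = H(x, x')$. For $(\h2)$ I would first note that $(\h1)$ together with $(\h2)$ implies each $h_i$ is bounded below (since $h_i(x, x+\xi)$ is $1$-periodic in $x$ and coercive in $\xi$), and then observe that any trial $y$ must satisfy $|y-x| \geq \tfrac{1}{2}|x-x'|$ or $|y-x'| \geq \tfrac{1}{2}|x-x'|$; coercivity of the relevant $h_i$ in that slot plus the lower bound on the other $h_i$ forces $H(x, x') \to +\infty$ as $|x'-x| \to \infty$, uniformly in $x$ by periodicity.

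For $(\h6\theta)$ I would use the Legendre-type rewriting
\[
\tfrac{\theta}{2}(x-x')^2 - H(x,x') = \sup_{y\in\R}\Big[\tfrac{\theta}{2}(x-x')^2 - h_1(x,y) - h_2(y,x')\Big],
\]
and split $\tfrac{\theta}{2}(x-x')^2 = \tfrac{\theta}{2}(x-y)^2 + \theta x(y-x') + C(y,x')$, where the second summand is affine in $x$ and the third is independent of $x$. For each fixed $y, x'$ the bracket is then the sum of a function convex in $x$ (by $(\h6\theta)$ for $h_1$), an affine function of $x$, and a constant, hence convex in $x$. A pointwise supremum of convex functions is convex, so $(\h6\theta)$ in $x$ holds for $H$ with the same $\theta$; the $x'$-version is symmetric.

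The crux is $(\h5)$, from which $(\h3)$ follows automatically. Fix $x < \xi$, $x' < \xi'$, and let $y_{ij}$ $(i,j\in\{0,1\})$ be a minimizer realising $H(x_i,x'_j)$ with $x_0 = x$, $x_1 = \xi$, $x'_0 = x'$, $x'_1 = \xi'$. A short Aubry-style cross argument using $(\h5)$ for $h_2$ rules out $y_{00} > y_{01}$ and, by symmetry, yields the full monotonicity $y_{00} \leq y_{01}, y_{10} \leq y_{11}$. In the case $y_{01} \leq y_{10}$, using $y_{01}$ as a trial for $H(x,x')$ and $y_{10}$ as a trial for $H(\xi,\xi')$ reduces the cross difference
\[
H(x,\xi') + H(\xi,x') - H(x,x') - H(\xi,\xi')
\]
to the lower bound $h_2(y_{01},\xi') + h_2(y_{10},x') - h_2(y_{01},x') - h_2(y_{10},\xi')$, which by $(\h5)$ for $h_2$ is at least $\int_{y_{01}}^{y_{10}}\!\!\int_{x'}^{\xi'}\rho_2\,ds\,ds'$. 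The case $y_{01} \geq y_{10}$ is handled symmetrically, with the trial roles of $y_{01}, y_{10}$ swapped and a lower bound in terms of $\rho_1$.

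The main obstacle is that each of these case-dependent lower bounds degenerates to $0$ when $y_{01} = y_{10}$, so extracting a single positive continuous $\rho_H$ requires quantifying how far the minimizer slopes $\partial_x y^*$ and $\partial_{x'} y^*$ can be from zero. The quantitative twist $(\h6\theta)$, which in the smooth setting gives $\partial_{11} h_2 + \partial_{22} h_1 \leq 2\theta$, controls exactly this through the envelope identity
\[
\partial_{12} H = -\frac{\partial_{12} h_1 \,\partial_{12} h_2}{\partial_{22} h_1 + \partial_{11} h_2},
\]
heuristically producing $\rho_H \geq \rho_1 \rho_2 /(2\theta)$. To legitimise this beyond the $C^2$ regime I would approximate $h_1, h_2$ by $C^2$ generating functions satisfying $(\h1)$--$(\h6\theta)$ with uniform constants, run the envelope computation on the approximants, and pass to the $C^0$ limit using continuity of the Legendre transform under uniform convergence. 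Finally, $(\h4)$ for $H$ is inherited from the same monotonicity-of-minimizers principle: an $H$-minimal segment $(\bar x, x, x')$ is in bijection with a minimal segment $(\bar x, y_*, x, y_{**}, x')$ for the alternating pair $(h_1, h_2, h_1, h_2)$, so $(\h4)$ for $H$ reduces to the alternating-generating-function version, itself a direct consequence of $(\h5)$ for $h_1$ and $h_2$.
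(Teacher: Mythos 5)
The paper itself does not prove this proposition; it cites Mather's \emph{Modulus of continuity for Peierls's barrier} \cite{MR920622}, so there is no internal argument to compare against. Taken on its own terms, your verification of $(\h1)$, $(\h2)$ and especially the supremum-of-convex-functions argument for $(\h6\theta)$ is correct and efficient: the decomposition $\tfrac{\theta}{2}(x-x')^2 = \tfrac{\theta}{2}(x-y)^2 + \theta x(y-x') + \bigl(\tfrac{\theta}{2}(x')^2 - \tfrac{\theta}{2}y^2\bigr)$ checks out, the middle term is affine in $x$ and the last is $x$-independent, and a supremum of convex functions is convex. You are also right that $(\h3)$ and $(\h4)$ follow once $(\h5)$ and $(\h6\theta)$ are established (the paper itself records this implication), so your closing paragraph on $(\h4)$ is unnecessary.

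The genuine gap is in $(\h5)$. Your two-case trial argument produces, in the case $y_{01}\le y_{10}$, the lower bound $\int_{y_{01}}^{y_{10}}\int_{x'}^{\xi'}\rho_2$, and in the other case a bound involving $\int_x^\xi\int_{y_{10}}^{y_{01}}\rho_1$; either way one of the factors $(\xi-x)$ or $(\xi'-x')$ has been traded for $|y_{10}-y_{01}|$. But the minimizer $y^*(s,s')$ is monotone increasing in \emph{both} slots, and passing from $(x,\xi')$ to $(\xi,x')$ raises the first slot while lowering the second, so $y_{10}-y_{01}$ can be exactly zero even when $\xi-x$ and $\xi'-x'$ are both large (for the model $h_1=h_2=\tfrac12(x-x')^2$ this happens whenever $\xi-x=\xi'-x'$, yet the cross-difference is $\tfrac12(\xi-x)(\xi'-x')>0$). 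You correctly identify that $(\h6\theta)$ plus the envelope identity is what supplies the missing quantitative control, but this does not \emph{repair} the case analysis—it replaces it. The route that actually closes is to telescope $H(x,\xi')+H(\xi,x')-H(x,x')-H(\xi,\xi')=-\int_x^\xi\bigl[\partial_1h_1(s,y^*(s,\xi'))-\partial_1h_1(s,y^*(s,x'))\bigr]\,ds$, bound the integrand from below by $\int_{y^*(s,x')}^{y^*(s,\xi')}\rho_1(s,t)\,dt$ using $(\h5)$ for $h_1$, and then bound $y^*(s,\xi')-y^*(s,x')$ from below by subtracting the two first-order conditions and invoking $\partial_{22}h_1+\partial_{11}h_2\le 2\theta$; this gives $y^*(s,\xi')-y^*(s,x')\ge\tfrac{1}{2\theta}\int_{x'}^{\xi'}\rho_2(y^*(s,\xi'),t')\,dt'$ without ever dividing by the possibly-vanishing second derivative, and yields a concrete $\rho_H(s,s')=\rho_1(s,y^*(s,s'))\,\rho_2(y^*(s,s'),s')/(2\theta)$. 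The mollification step you propose then carries this from $C^2$ to the merely continuous case.
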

Therefore,  one can still define the minimal configurations and Peierls barriers with respect to  $h_1*h_2$.

Given a function $h$ as descried above and a rational number $p/q$ (in lowest terms), we define the following conjunction
\begin{equation}\label{conjunct itself}
  H_{(q,p)}(x,x'):=h^{*q}(x,x'+p)=h*\cdot\cdot\cdot*h(x,x'+p),
\end{equation}
where $h^{*q}$ denotes the $q$-fold conjunction of $h$ with itself. It is easy to verify the following equivalent relations about the minimal configurations and the Peierls barrier functions.

\begin{Pro}\textup{(\cite{MR1323222}, \cite{MR967638})}\label{equivalence of conjunction}
If $h$  satisfies $(\h1)-(\h5)$ and $(\h6\theta)$, and $H_{(q,p)}(x,x')$ is the conjunction (\ref{conjunct itself}), then
$$\A_{\omega,h}=\A_{q\omega-p,H_{(q,p)}}~\text{and}~P_{\omega,h}(\xi)=P_{q\omega-p,H_{(q,p)}}(\xi),$$
where the rotation symbol $\omega\in(\R\setminus\Q)$ or $\omega=\frac{p_1}{q_1}\pm,\frac{p_1}{q_1}$ whose denominator $q_1$ is divisible by $q$.
\end{Pro}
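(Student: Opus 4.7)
The plan is to establish a rotation-symbol-preserving bijection $\Phi$ between the minimal configurations of $h$ and those of $H_{(q,p)}$. Both claimed equalities will then drop out by unpacking the relevant definitions and a small amount of Aubry-graph bookkeeping.

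Define $\Phi:x\mapsto z$ by $z_j=\Phi(x)_j:=x_{qj}-jp$ for $j\in\Z$. Since $(\h1)$ for $h$ extends to any finite conjunction, $H_{(q,p)}$ is also $(\h1)$-invariant, and a direct substitution gives
$$H_{(q,p)}\bigl(\Phi(x)_j,\Phi(x)_{j+1}\bigr)=h^{*q}\bigl(x_{qj},x_{q(j+1)}\bigr)\leq\sum_{i=qj}^{q(j+1)-1}h(x_i,x_{i+1}),$$
with equality whenever the finite segment $(x_{qj},\dots,x_{q(j+1)})$ is $h$-minimal. From $\rho(x)=\pi(\omega)$ one immediately reads off $\rho(\Phi(x))=q\pi(\omega)-p=\pi(q\omega-p)$; and for the rational cases $\omega=p_1/q_1$ or $p_1/q_1\pm$, the divisibility hypothesis $q_1=qk$ gives
$$\Phi(x)_{j+k}-\Phi(x)_j=x_{qj+q_1}-x_{qj}-kp,$$
so the trichotomy (a)/(b)/(c) for $x$ with period $(q_1,p_1)$ transfers verbatim to $\Phi(x)$ with period $(k,p_1-kp)$, preserving the $\pm$ decoration. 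Together with the cost equality, this shows $\Phi(\M_{\omega,h})\subset\M_{q\omega-p,H_{(q,p)}}$. Conversely, given $z\in\M_{q\omega-p,H_{(q,p)}}$, set $x_{qj}:=z_j+jp$ and insert into each block $[qj,q(j+1)]$ a minimiser of $h^{*q}(x_{qj},x_{q(j+1)})$; the resulting $x$ lies in $\M_{\omega,h}$ and $\Phi(x)=z$, so $\Phi$ is a bijection. Since $\Phi(x)_0=x_0$, projecting to the 0-th coordinate yields $\A_{\omega,h}=\A_{q\omega-p,H_{(q,p)}}$ immediately.

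For the Peierls barrier identity, if $\xi$ lies in this common $\A$-set both sides vanish. Otherwise $\xi\in(J_-,J_+)$, bracketed by $x^\pm\in\M_{\omega,h}$ and equivalently by $\Phi(x^\pm)\in\M_{q\omega-p,H_{(q,p)}}$. I would then group the competitor variables $(y_i)$ in (\ref{the definiton of barrier}) into consecutive blocks of length $q$ and perform the inner minimisation over the $q-1$ interior entries of each block; by definition of the conjunction this produces $H_{(q,p)}\bigl(y_{qj}-jp,\,y_{q(j+1)}-(j+1)p\bigr)$, and the subtracted background $\sum h(x_i^-,x_{i+1}^-)$ rearranges in precisely the same way. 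Aubry's Crossing Lemma (Lemma \ref{Aubry crossing lemma}) ensures that the interior minimisers automatically respect the pointwise bounds $x_i^-\leq y_i\leq x_i^+$, so the outer minimum is exactly the defining expression for $P_{q\omega-p,H_{(q,p)}}(\xi)$.

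The delicate bookkeeping sits in the symbol-transfer step: matching the $\pm$ decoration and writing $q\omega-p=(p_1-pk)/k$ in not-necessarily-lowest terms when $q_1=qk$. The divisibility hypothesis is used precisely here, since without it $\Phi(x)$ never closes up over an integer number of $\Phi$-steps and the would-be symbol on the $H_{(q,p)}$-side has no meaning in the space $S$. In the pure-rational case $\omega=p_1/q_1$, the finite index set $I=\{0,\dots,q_1-1\}$ in (\ref{the definiton of barrier}) collapses to $\{0,\dots,k-1\}$ after subsampling by the same divisibility, so the periodic-constraint version of the argument goes through unchanged.
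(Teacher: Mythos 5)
The paper states this proposition without proof, citing it to \cite{MR1323222} and \cite{MR967638}, so there is no in-paper argument to compare against; your proof must therefore stand on its own, and it does. The subsampling map $\Phi(x)_j=x_{qj}-jp$ is exactly the standard device used in the cited sources, and each step you outline is sound: the identity $H_{(q,p)}(\Phi(x)_j,\Phi(x)_{j+1})=h^{*q}(x_{qj},x_{q(j+1)})$ follows from the definition of $H_{(q,p)}$ and $(\h1)$ applied $jp$ times; the concatenation argument shows $\Phi$ carries $h$-minimal configurations to $H_{(q,p)}$-minimal ones (compare any $H_{(q,p)}$-competitor $(w_i)$ by expanding each $H_{(q,p)}(w_i,w_{i+1})$ into a $q$-block $h$-minimizer and invoking minimality of $(x_{qj},\dots,x_{qj'})$), and the reverse fill-in construction proves surjectivity; injectivity, which you do not mention but implicitly use when calling $\Phi$ a bijection, follows from Aubry's Crossing Lemma (two minimal configurations agreeing at all multiples of $q$ would cross infinitely often). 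The rotation-symbol bookkeeping is also right, including the point you flag: with $q_1=qk$ and $\gcd(p_1,q_1)=1$ one has $\gcd(p_1-kp,k)=\gcd(p_1,k)=1$, so $(p_1-kp)/k$ is automatically in lowest terms and the finite index set $\{0,\dots,k-1\}$ is the correct one on the $H_{(q,p)}$ side. Since $\Phi(x)_0=x_0$, the equality of Aubry sets is immediate, and the block decomposition for the Peierls barrier, together with the crossing-lemma argument that interior block minimizers stay between $x^-$ and $x^+$, completes the identity $P_{\omega,h}=P_{q\omega-p,H_{(q,p)}}$. This is a correct, complete, and essentially the canonical proof of the cited proposition.
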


\section{Main results}
\subsection{Monotone twist diffeomorphism}
Let $f$ be an exact area-preserving monotone twist diffeomorphism and let $\bar{f}$ be a lift of $f$ which satisfies $\bar{f}(x+1,y)=\bar{f}(x,y)+(1,0)$, and we set $\bar{f}(x,y)=(\bar{f_1}(x,y), \bar{f_2}(x,y))$. For the rotation interval $[\omega_0,\omega_1]$, there is a compact annulus $\an_K:=\s\times[-K,K]$ with sufficiently large $K=K(\omega_0,\omega_1,f)$ so that the minimal orbits of $f$ satisfies
\begin{equation}\label{basic setting}
 \m_{\omega,\bar{f}}\subseteq\an_{K-2}\subseteq\an_{K-1}\subseteq\an_K, ~\forall\omega\in[\omega_0,\omega_1].
\end{equation}

The space $C^1(\an_K)=C^1(\an_K,\R)$ is provided with the norm:
$$\|f\|_{C^1(\an_K)}=\sup\limits_{0\leq j\leq 1}\max\limits_{\an_K}|D^jf|.$$

\begin{The}\label{main theorem 1}
Let $f\in\J$, $[\omega_0,\omega_1]$ and $\an_K$  be as shown above. There exist positive numbers $\delta_0=\delta_0(\omega_0,\omega_1,f)\ll 1$ and $C_0=C_0(\omega_0,\omega_1,f)$ such that for any $f'\in\J$, if $\|f'-f\|_{C^1(\an_K)}\leq \delta_0$, then the corresponding Peierls barrier functions satisfy
$$|P_{\omega,f'}(\xi)-P_{\omega,f}(\xi)|\leq C_0\|f'-f\|_{C^1(\an_K)}^{\frac{1}{3}}~,~~\forall \xi\in\R$$
where $\omega\in(\R\setminus\Q)\bigcup(Q-)\bigcup{Q+}$, with $\pi(\omega)\in [\omega_0,\omega_1]$.
\end{The}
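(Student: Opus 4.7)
The plan is to combine three ingredients: a passage from the twist map $f$ to its generating function $h$, the closed-form expression for the Peierls barrier at a rational rotation number via the conjunction operation, and Proposition \ref{continuous of modulus} to interpolate from rational rotation numbers to a general rotation symbol.

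\emph{Step 1 (from $f$ to $h$).} Set $\varepsilon = \|f' - f\|_{C^1(\an_K)}$. Inverting the relation $\bar f_1(x, y) = x'$ via the implicit function theorem (using the uniform positive twist on $\an_K$) gives $\|h - h'\|_{C^0(R)} \lesssim \varepsilon$ on any bounded rectangle $R \subset \R^2$. By $(\h1)$ and Proposition \ref{rotation estimates}, every pair $(y_i, y_{i+1})$ arising from a minimal configuration with $\pi(\omega) \in [\omega_0, \omega_1]$ lies in $R$ modulo the diagonal $\Z$-action of $h$, so $R$ can be chosen independently of any denominator $q$ that will appear below.

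\emph{Step 2 (rational rotation number).} For $\omega = p/q$ in lowest terms, Proposition \ref{equivalence of conjunction} together with Aubry's crossing lemma (Lemma \ref{Aubry crossing lemma}) reduces the Peierls barrier to the closed-form identity
$$P_{p/q, h}(\xi) = H_{(q, p), h}(\xi, \xi) - \min_{\eta \in \R} H_{(q, p), h}(\eta, \eta),$$
because any $q$-periodic minimizer of $H_{(q, p), h}$ on the diagonal is itself a minimal configuration and cannot cross $x^\pm$ without violating the crossing lemma, so the box constraint in the definition of $P_{p/q, h}$ is automatic. Feeding the minimizer of one of the two problems into the other yields the termwise bound $|H_{(q, p), h}(\eta, \eta) - H_{(q, p), h'}(\eta, \eta)| \le q \|h - h'\|_{C^0(R)} \lesssim q \varepsilon$; after additionally tracking how the endpoints $J_\pm$ of the complementary interval to $\A_{\omega, h}$ shift under the perturbation, this produces a core estimate of the form $|P_{p/q, h}(\xi) - P_{p/q, h'}(\xi)| \lesssim q^{\alpha} \varepsilon$ with an exponent $\alpha \ge 1$.

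\emph{Step 3 (interpolation and optimization).} For $\omega$ irrational with $\pi(\omega) \in [\omega_0, \omega_1]$, a Dirichlet convergent $p/q$ satisfies $|\pi(\omega) q - p| \le 1/q$; Proposition \ref{continuous of modulus}(1) applied separately to $h$ and to $h'$ then gives $|P_{\omega, h}(\xi) - P_{p/q, h}(\xi)| \lesssim 1/q$ and the same for $h'$. Combining with Step 2 one obtains
$$|P_{\omega, h}(\xi) - P_{\omega, h'}(\xi)| \lesssim \frac{1}{q} + q^{\alpha} \varepsilon,$$
and the balance $q \sim \varepsilon^{-1/(\alpha+1)}$ yields a H\"older exponent $\alpha/(\alpha+1)$; with the effective $\alpha = 2$ coming out of Step 2 this is the asserted $1/3$. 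For $\omega = p/q\pm$ the same scheme applies, with part (2) of Proposition \ref{continuous of modulus} and a Farey/Stern--Brocot neighbor $p'/q'$ of $p/q$ in place of the Dirichlet convergent.

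The main obstacle I foresee is the $\Q\pm$ case in Step 3: the auxiliary rational $p'/q'$ is not canonically attached to $p/q\pm$ (unlike the Dirichlet convergents of an irrational), so a Farey neighbor balancing the two error sources uniformly in $\omega \in [\omega_0, \omega_1]$ must be selected carefully. A subsidiary technical issue, already visible in Step 1, is ensuring that for $\varepsilon$ sufficiently small the perturbed $f'$ still belongs to some $\J_\beta$ with $\beta$ stable, so that the constant $\theta = \cot\beta$ threading Proposition \ref{continuous of modulus} does not blow up.
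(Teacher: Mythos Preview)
Your proposal has a genuine gap in Step 3. You plan to pass from $P_{\omega,h}$ to $P_{p/q,h}$ via Proposition \ref{continuous of modulus}(1), which carries an error $1200\theta\bigl(1/q + |q\pi(\omega) - p|\bigr)$, and then balance the $1/q$ contribution against the $q^{\alpha}\varepsilon$ from Step 2 by choosing $q \sim \varepsilon^{-1/(\alpha+1)}$. But neither continued-fraction convergents nor Dirichlet's theorem lets you prescribe $q$: Dirichlet gives only an \emph{upper} bound $q \le n$. If $\omega$ is very close to a rational $p_0/q_0$ of small denominator (e.g.\ a Liouville number sitting between two widely spaced convergents), every approximant you produce has $q = q_0$, and the $1/q_0$ term is of order one. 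The estimate therefore cannot be made uniform in $\omega \in [\omega_0,\omega_1]$, which the theorem requires.

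The paper avoids precisely this term by never landing on the bare rational symbol $p/q$. It uses part (2) of Proposition \ref{continuous of modulus}, which compares $P_\omega$ to $P_{p/q\pm}$ with error $\lesssim |q\omega - p|$ only, no $1/q$. Since $P_{p/q\pm}$ has no finite closed-form sum, the paper rewrites $P_{p/q\pm,h} = P_{0\pm, H_{(q,p)}}$ via Proposition \ref{equivalence of conjunction}, then approximates $P_{0+, H_{(q,p)}}$ by $P_{1/m, H_{(q,p)}}$ with a \emph{second} free integer $m$ (Lemma \ref{lemma 4}), and finally compares the latter for $h$ and $h'$ (Lemma \ref{lemma 5}(2)). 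The total error is $|q\omega - p| + \theta/m + mq\varepsilon$ with $q \le n$ and $|q\omega-p|\le 1/(n+1)$; the choice $m = n = [\varepsilon^{-1/3}]$ then gives the $1/3$ exponent uniformly in $\omega$.

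A secondary point: your closed-form identity in Step 2 is correct (it is Lemma \ref{lemma 3} rephrased) and already yields $|P_{p/q,h}(\xi) - P_{p/q,h'}(\xi)| \lesssim q\varepsilon$, i.e.\ $\alpha = 1$, not $2$; the endpoints $J_\pm$ never enter that formula, so the ``tracking'' you allude to is not needed. But even with $\alpha = 1$ the scheme founders on the $1/q$ term, so the value of $\alpha$ is moot. The obstacle you flagged for the $\Q\pm$ case is really the same phenomenon: there is an intrinsic $O(1/q)$ gap between $P_{p/q}$ and $P_{p/q\pm}$ (apply Proposition \ref{continuous of modulus}(1) with $\omega = p/q\pm$), and your closed formula lives on the wrong side of it.
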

\begin{Rem}
\begin{enumerate}[(1)]
  \item The projection $\pi$ is defined in (\ref{underlying number}). The conclusion of Theorem \ref{main theorem 1} also holds for $C^r$ perturbations.
  \item The novelty here is that the constant $C_0$ does not depend on $\xi$ and the $1/3$-H\"{o}lder regularity is uniform in $\xi$, which could be derived from our proof below, see (\ref{final 3}).
  \item For rational symbol $\omega=p/q$, the conclusion of Theorem \ref{main theorem 1}  doesn't hold in general since the map $\omega\mapsto P_{\omega}$ is not continuous at rational symbol.
\end{enumerate}
\end{Rem}

In order to prove Theorem \ref{main theorem 1}, we need the following lemmas.

\begin{Lem}\label{lemma 1}
Under the same assumptions of Theorem \ref{main theorem 1}, let $h$, $h'$ be generating functions of $f$ and $f'$ respectively, satisfying the condition $h(0,0)=h'(0,0)=0$. There exist positive numbers $\delta_1=\delta_1(K,f)\ll 1$ and  $C_1=C_1(K,f)$, such that if $\|f'-f\|_{C^1(\an_K)}\leq \delta_1$, then
$$\|h'-h\|_{C^0(\B_{K-1})}\leq C_1\|f'-f\|_{C^1(\an_K)}$$
where $\B_{K-1}:=\{ (x,x')\in\R^2 | \bar{f_1}(x,-(K-1))\leq x'\leq\bar{f_1}(x,K-1) \}$.
\end{Lem}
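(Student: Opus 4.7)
The plan is to use the identity $dh = -y\,dx + y'\,dx'$, which encodes the exact area-preserving twist structure of $f$ (items (2) and (3) of the definition of $\J$ together with \eqref{generating}), and then to express $h'-h$ as a line integral of the difference of the implicit-function data for $f$ and $f'$. For $(x,x')\in \B_{K-1}$, the twist condition produces a unique $y=y(x,x')\in[-(K-1),K-1]$ with $\bar{f_1}(x,y)=x'$, and we set $y':=\bar{f_2}(x,y)$. Compactness of $\an_K$ furnishes constants $\mu>0$ (a lower bound for $\partial_y\bar{f_1}$) and $M>0$ (an upper bound for $\|\bar f\|_{C^1(\an_K)}$). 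If $\delta_1$ is small enough (depending on $K$ and $f$), then for every $(x,x')\in \B_{K-1}$ the equation $\bar{f}'_1(x,y^*)=x'$ has a unique solution $y^*\in[-K,K]$, because the twist gaps $\bar{f_1}(x,K)-\bar{f_1}(x,K-1)$ and $\bar{f_1}(x,-(K-1))-\bar{f_1}(x,-K)$ are each at least $\mu$, dominating $\delta_1$. Applying the mean value theorem to $\bar{f_1}(x,y)-\bar{f_1}(x,y^*)=\bar{f}'_1(x,y^*)-\bar{f_1}(x,y^*)$ yields $|y-y^*|\le \mu^{-1}\|f'-f\|_{C^0(\an_K)}$, and consequently $|y'-y^{**}|\le C\|f'-f\|_{C^1(\an_K)}$, where $y^{**}:=\bar{f}'_2(x,y^*)$.

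Next, by \eqref{generating} the 1-form $-y(x,x')\,dx+y'(x,x')\,dx'$ equals $dh$ on $\B_{K-1}$, so for any path $\gamma\subset \B_{K-1}$ from $(0,0)$ to $(x_0,x_0')$,
\[
h(x_0,x_0')=\int_\gamma\bigl(-y\,dx+y'\,dx'\bigr),
\]
and identically for $h'$ with $(y^*,y^{**})$ in place of $(y,y')$. Condition $(\h1)$ and the normalization $h(0,0)=h'(0,0)=0$ make $h'-h$ invariant under $(x,x')\mapsto(x+1,x'+1)$, so it suffices to prove the bound for $x_0\in[0,1]$. On this fundamental slab, $\B_{K-1}$ is a bounded strip between two smooth graphs, and one can select a piecewise affine path $\gamma\subset \B_{K-1}$ from $(0,0)$ to $(x_0,x_0')$ of length at most $L=L(K,f)$. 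Subtracting the two line-integral representations and applying the previous paragraph gives
\[
|h'(x_0,x_0')-h(x_0,x_0')|\le\int_\gamma\bigl(|y-y^*|+|y'-y^{**}|\bigr)\,|d\gamma|\le C_1\|f'-f\|_{C^1(\an_K)},
\]
with $C_1=C_1(K,f)$ absorbing $\mu$, $M$, and $L$.

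The quantitative implicit function step is routine; the real care is geometric. One must verify that, for $\delta_1$ small enough, the perturbed implicit function $y^*$ is defined on the whole of $\B_{K-1}$ and remains inside $\an_K$ (so that the uniform $C^1$ bounds on $\bar{f}'$ inherited from those on $\bar{f}$ apply throughout the argument), and that, after quotienting by $(x,x')\mapsto(x+1,x'+1)$, the connecting path $\gamma$ can be chosen with length bounded uniformly in $(x_0,x_0')$. Both issues are handled by compactness of the reduced fundamental domain together with smallness of $\delta_1$.
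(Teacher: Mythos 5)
Your proposal is correct and is essentially the paper's argument. The paper also writes $h$ via the fundamental theorem of calculus (it uses the explicit L-shaped path $(0,0)\to(x,0)\to(x,x')$, which separates the estimate cleanly into an $I_1$ term involving only $\partial_1 h$ and an $I_2$ term involving only $\partial_2 h$), and it controls $\partial_i h' - \partial_i h$ by exactly the same implicit-function/mean-value trick on $\bar{f_1}$ that you use, with the lower bound on $\partial_y\bar{f_1}$ playing the role of your $\mu$. Your version integrates the full 1-form $-y\,dx + y'\,dx'$ along a generic bounded-length path in $\B_{K-1}$, which is a cosmetic rather than substantive difference.
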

\begin{proof}
Take $\delta_1$ small enough such that $\|f'-f\|_{C^1(\an_K)}\leq\delta_1\ll 1$, then the lifts
$$\|\bar{f'}-\bar{f}\|_{C^1([0,1]\times[-K,K])}=\|f'-f\|_{C^1(\an_K)}.$$
Since for all $k\in\Z$,  $\bar{f'}(x+k,y)=\bar{f'}(x,y)+(k,0)$ and $\bar{f}(x+k,y)=\bar{f}(x,y)+(k,0)$, one can deduce that
\begin{equation}\label{equivalence}
 \|\bar{f'}-\bar{f}\|_{C^1(\R\times[-K,K])}=\|\bar{f'}-\bar{f}\|_{C^1([0,1]\times[-K,K])}=\|f'-f\|_{C^1(\an_K)}.
\end{equation}

By adding a constant we can assume $h'(0,0)=h(0,0)=0$. By choosing suitably large $K$, we can assume $(0,0)\in \B_{K-2}$.
From the assumption (\ref{basic setting}), for all $\omega$ with $\pi(\omega)\in[\omega_0,\omega_1]$ and $x=(x_i)_{i\in\Z}\in\M_{\omega,h}$, we have $(x_i,x_{i+1})\in\B_{K-2}$, i.e.
$$\bar{f_1}(x_i,-(K-2))\leq x_{i+1}\leq\bar{f_1}(x_i,K-2).$$
Since $\bar{f'}$ and $\bar{f}$ are sufficiently close, then for  all $x'=(x'_i)_{i\in\Z}\in\M_{\omega,h'}$, we have $(x'_i,x'_{i+1})\in\B_{K-1}$, i.e.
$$\bar{f'_1}(x'_i,-(K-1))\leq x'_{i+1}\leq\bar{f'_1}(x'_i,K-1).$$
From now on, we only consider $h, h'$ restricted on the set $\B_{K-1}$ (See Figure 2).

In the following, we set the bounded region $\D:=\B_{K-1}\bigcap([0,1]\times\R)$.
\begin{figure}
  \centering
  \includegraphics[scale=0.6]{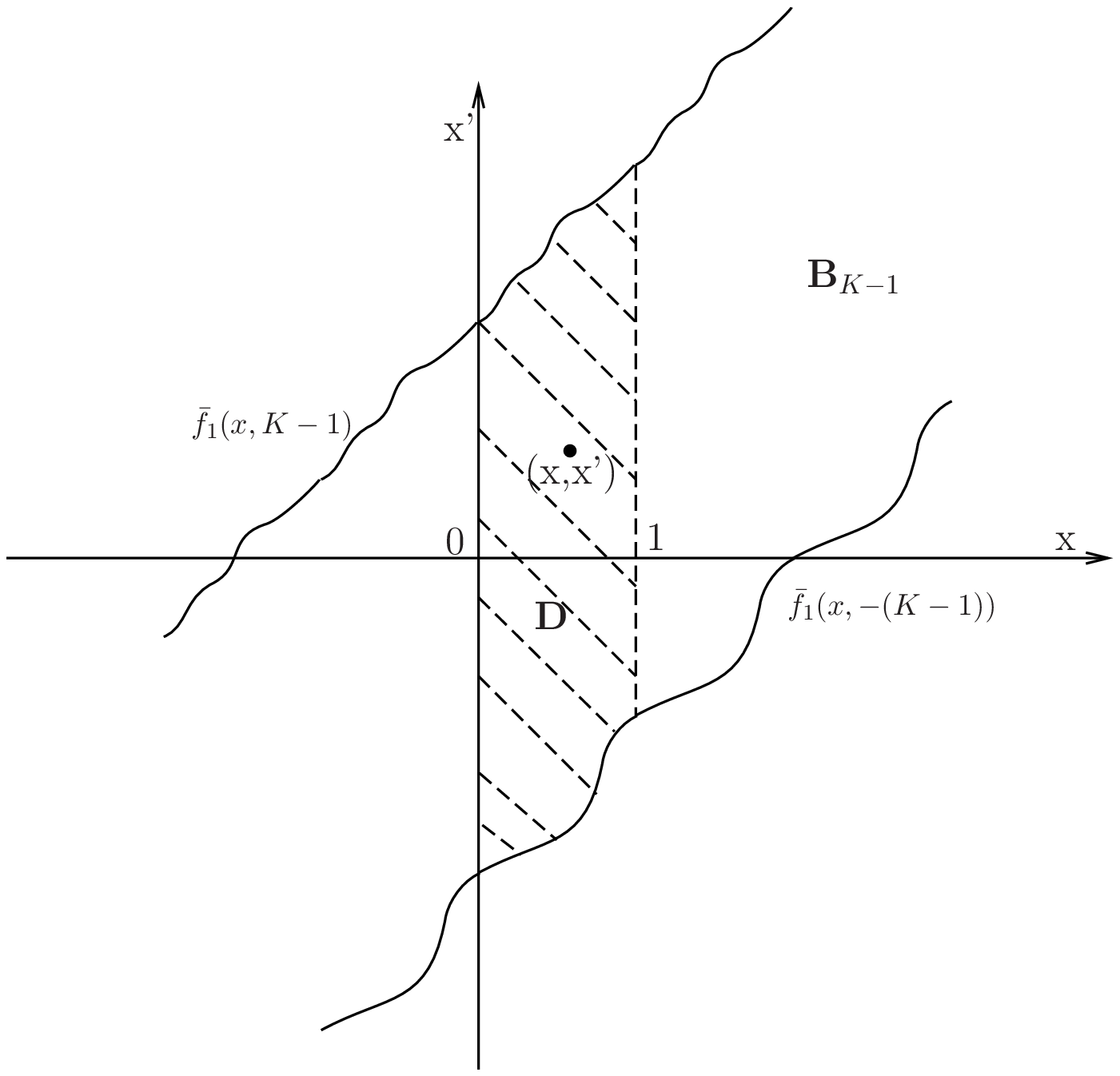}\\
  \caption{}\label{graph of domain}
\end{figure}
By the condition $(\h1)$, for all $(x,x')\in\B_{K-1}$,
\begin{equation}\label{results of lemma 1}
\|h'-h\|_{C^0(\B_{K-1})}=\|h'-h\|_{C^0(\D)}
\end{equation}

Because $h,h$ are $C^2$, so for $(x,x')\in\D$,
\begin{align*}
   \begin{split}
   h(x,x')=h(0,0)+\int_0^x\partial_1h(t,0)dt+\int_0^{x'}\partial_2h(x,t)dt  \\
   h'(x,x')=h'(0,0)+\int_0^x\partial_1h'(t,0)dt+\int_0^{x'}\partial_2h'(x,t)dt
   \end{split}
\end{align*}
and
\begin{align}\label{perturbation of h}
\begin{split}
|h'(x,x')-h(x,x')|&\leq\int_0^x|\partial_1h'(t,0)-\partial_1h(t,0)|dt+\int_0^{x'}|\partial_2h'(x,t)-\partial_2h(x,t)|dt\\
&\triangleq I_1+I_2
\end{split}
\end{align}
\paragraph{\textbf{Step 1}}
Firstly, let's estimate $I_1$. By (\ref{generating}), we have
\begin{equation}\label{generating of I_1}
\begin{cases}
0=\bar{f_1}(t,-\partial_1h(t,0)),\\
0=\bar{f'_1}(t,-\partial_1h'(t,0)),
\end{cases}
0\leq t\leq 1
\end{equation}
and by (\ref{generating of I_1}),
\begin{align}\label{I_1}
 \begin{split}
 0&=\bar{f_1}(t,-\partial_1h(t,0))-\bar{f'_1}(t,-\partial_1h'(t,0))\\
 &=\bar{f_1}(t,-\partial_1h(t,0))-\bar{f_1}(t,-\partial_1h'(t,0))+\bar{f_1}(t,-\partial_1h'(t,0))-\bar{f'_1}(t,-\partial_1h'(t,0))
 \end{split}
\end{align}

 If we set $a:=\min\limits_{(x,y)\in\R\times[-K,K]}\frac{\partial \bar{f_1}}{\partial y}(x,y)$, then $a>0$ since $\frac{\partial\bar{f_1}}{\partial y}>0$ and $\bar{f_1}(x+k,y)=\bar{f_1}(x,y)+(k,0).$

Because $\bar{f},\bar{f'}$ are sufficiently close, so it's easy to compute that
\begin{equation}\label{boundness of I_1}
|-\partial_1h(t,0))|\leq K-1,~|-\partial_1h'(t,0))|\leq K,~\forall 0\leq t\leq 1
\end{equation}
By (\ref{I_1}) and (\ref{boundness of I_1}),
\begin{align*}
|\bar{f_1}(t,-\partial_1h'(t,0))-\bar{f'_1}(t,-\partial_1h'(t,0))|&=|\bar{f_1}(t,-\partial_1h(t,0))-\bar{f_1}(t,-\partial_1h'(t,0))|\geq a|\partial_1h'(t,0)-\partial_1h(t,0)|.
\end{align*}
Therefore,
\begin{align*}
\|\bar{f'_1}-\bar{f_1}\|_{C^1(\R\times[-K,K])}\geq a|\partial_1h'(t,0)-\partial_1h(t,0)|
\end{align*}
and by (\ref{equivalence}),
\begin{align}\label{estimate of I_1}
I_1\leq\int_0^x\frac{1}{a}\|\bar{f'_1}-\bar{f_1}\|_{C^1(\R\times[-K,K])} dt\leq\int_0^x \frac{1}{a}\|f'-f\|_{C^1(\an_K)}dt\leq\frac{1}{a}\|f'-f\|_{C^1(\an_K)},
\end{align}
since $0\leq x\leq 1$.
\paragraph{\textbf{Step 2}}
Secondly, let's estimate $I_2$. For each fixed $x$, let $$\phi_x(y):=\bar{f_1}(x,y) \text{~and~} \phi'_x(y):=\bar{f'_1}(x,y).$$
Since $\frac{d\phi_x(y)}{dy}>0$, we could choose a positive number $0<b<1$ such that
\begin{equation}\label{phi}
  b\leq\frac{d\phi_x(y)}{dy}\leq\frac{1}{b} ~\textup{for all}~(x,y)\in[0,1]\times[-K,K].
\end{equation}
Because $\bar{f},\bar{f'}$ are sufficiently close, so it's easy to compute that for $(x,t)\in\D$,
\begin{equation}\label{boundness of I_2}
  |\phi^{-1}_x(t)|\leq K-1,~|(\phi'_x)^{-1}(t)|\leq K
\end{equation}
where $\phi^{-1}_x, (\phi'_x)^{-1}$ are the inverse function of $\phi_x, \phi'_x $ respectively.

Then, by (\ref{equivalence}), (\ref{phi}), (\ref{boundness of I_2}), we can conclude that for $(x,t)\in\D$,
\begin{align}\label{I_2}
\begin{split}
|(\phi'_x)^{-1}(t)-(\phi_x)^{-1}(t)|&=|(\phi_x)^{-1}\circ\phi_x\circ(\phi'_x)^{-1}(t)-(\phi_x)^{-1}(t)|\\
&\leq\max\limits_{(x,s)\in\D}|\frac{d(\phi_x)^{-1}(s)}{ds}|~|\phi_x\circ(\phi'_x)^{-1}(t)-t|\\
&\leq\frac{1}{b}|(\phi_x-\phi'_x+\phi'_x)\circ(\phi'_x)^{-1}(t)-t|\\
&=\frac{1}{b}|(\phi_x-\phi'_x)\circ(\phi'_x)^{-1}(t)|\\
&=\frac{1}{b}|\bar{f_1}(x,(\phi'_x)^{-1}(t))-\bar{f'_1}(x,(\phi'_x)^{-1}(t))|\\
&\leq\frac{1}{b}\|\bar{f'}-\bar{f}\|_{C^1([0,1]\times[-K,K])}=\frac{1}{b}\|f'-f\|_{C^1(\an_K)}.
\end{split}
\end{align}

By (\ref{generating}), we get
\begin{equation}\label{generating of I_2}
\begin{cases}
\partial_2h(x,t)=\bar{f_2}(x,\phi_x^{-1}(t)),\\
\partial_2h'(x,t)=\bar{f'_2}(x,(\phi'_x)^{-1}(t)),
\end{cases}
\end{equation}
and by (\ref{I_2}) and (\ref{generating of I_2}), one can deduce that
\begin{align}
\begin{split}
|\partial_2h'(x,t)-\partial_2h(x,t)|&\leq|\bar{f'_2}(x,(\phi'_x)^{-1}(t))-\bar{f_2}(x,(\phi'_x)^{-1}(t))|+|\bar{f_2}(x,(\phi'_x)^{-1}(t))-\bar{f_2}(x,\phi_x^{-1}(t))|\\
&\leq \|f'-f\|_{C^1(\an_K)}+\|\frac{\partial\bar{f_2}}{\partial y}\|_{C^0([0,1]\times[-K,K])}~|(\phi'_x)^{-1}(t)-(\phi_x)^{-1}(t)|\\
&\leq (1+\frac{L}{b})\|f'-f\|_{C^1(\an_K)}
\end{split}
\end{align}
where $L=\|\frac{\partial\bar{f_2}}{\partial y}\|_{C^0([0,1]\times[-K,K])}$.\\
Furthermore,
\begin{align}\label{estimates of I_2}
\begin{split}
I_2&\leq\int_0^{x'}(1+\frac{L}{b})\|f'-f\|_{C^1(\an_K)}dt\leq \max\limits_{x\in[0,1]}(~|\bar{f_1}(x,K)|+|\bar{f_1}(x,-K)|~) (1+\frac{L}{b})\|f'-f\|_{C^1(\an_K)}\\
&\leq 2\|\bar{f}\|_{C^1([0,1]\times[-K,K])}(1+\frac{L}{b})\|f'-f\|_{C^1(\an_K)}
\end{split}
\end{align}

Finally, by (\ref{results of lemma 1}), (\ref{perturbation of h}), (\ref{estimate of I_1}) and (\ref{estimates of I_2}), we get that
$$\|h'-h\|_{C^0(\B_{K-1})}\leq C_1\|f'-f\|_{C^1(\an_K)}$$
with the constant $C_1$ only depending on $K$ and $f$.
\end{proof}

The following lemma give an equivalent definition of the Peierls barrier for the rational rotation symbol.
\begin{Lem}\label{lemma 3}
Let $h$ be a continuous real valued function satisfying $(\h1)-(\h5)$ and $(\h6\theta)$, then for rational rotation symbol $\frac{p}{q}\in\Q$(in lowest terms), the Peierls barrier has an equivalent defintion:
$$P_{\frac{p}{q},h}(\xi)=\min_{\substack{y_0=\xi\\y_q=y_0+p}}\sum\limits_{i=0}^{q-1}h(y_i,y_{i+1})-\min_{\substack{x_q=x_0+p}}\sum\limits_{i=0}^{q-1}h(x_i,x_{i+1}).$$
\end{Lem}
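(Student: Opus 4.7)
The plan is to reduce the asserted identity to a single claim: every unconstrained $p/q$-periodic minimizer of the action automatically respects the pinching $x_i^-\le y_i\le x_i^+$ built into the definition (\ref{the definiton of barrier}). The second minimum $\min_{x_q=x_0+p}\sum_{i=0}^{q-1}h(x_i,x_{i+1})$ is attained by any $x\in\M_{p/q,h}$, since all such configurations satisfy relation $(b)$ of \S 2.1 and are periodic minimizers of the action; in particular it equals $\sum_{i=0}^{q-1}h(x_i^-,x_{i+1}^-)$, which is precisely the reference term subtracted in (\ref{the definiton of barrier}) with $I=\{0,\ldots,q-1\}$. The case $\xi\in\A_{p/q,h}$ is then immediate: pick $y\in\M_{p/q,h}$ with $y_0=\xi$; then $y_q=y_0+p$, $y$ realises the first minimum, and both sides of the identity vanish in agreement with $P_{p/q,h}(\xi)=0$.

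The substantive case is $\xi\in(J_-,J_+)\subset\R\setminus\A_{p/q,h}$. Let $y=(y_i)_{i\in\Z}$, extended by $y_{i+q}=y_i+p$, be any minimizer of $\sum_{i=0}^{q-1}h(y_i,y_{i+1})$ subject only to $y_0=\xi$. Then $(y_0,\ldots,y_q)$ has fixed endpoints $\xi$ and $\xi+p$ and minimises action, so it is a minimal segment in the sense of \S 2; the same is true of $(x_0^-,\ldots,x_q^-)$ because $x^-\in\M_{p/q,h}$. At the two endpoints of one period we have the strict inequalities
\[
y_0-x_0^-=\xi-J_->0,\qquad y_q-x_q^-=(\xi+p)-(J_-+p)=\xi-J_->0.
\]
If $y_{i_0}<x_{i_0}^-$ held at some $0<i_0<q$, the Aubry graphs of $y$ and $x^-$ would sign-change at least twice in $[0,q]$, contradicting Aubry's Crossing Lemma (Lemma~\ref{Aubry crossing lemma}) applied to these two minimal segments on the common range $[0,q]$. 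Hence $y_i\ge x_i^-$ on $[0,q]$, and by the common $p/q$-translation invariance for all $i\in\Z$. The symmetric comparison with $x^+$ yields $y_i\le x_i^+$, so $y$ belongs to the admissible set of (\ref{the definiton of barrier}) and the two versions of the first minimum agree; subtracting the second minimum then gives the asserted identity.

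The main subtlety is that Aubry's Crossing Lemma is stated for pairs of doubly-infinite minimal configurations, whereas I apply it to the finite minimal segments $(y_0,\ldots,y_q)$ and $(x_0^-,\ldots,x_q^-)$. This is legitimate because Bangert's proof invokes condition $(\h3)$ only on the finite range in which crossings are to be ruled out; however, the lemma counts coincidences as crossings, so I must ensure the boundary inequalities at $i=0$ and $i=q$ are strict, and this is precisely what the strict inclusion $\xi\in(J_-,J_+)$ provides.
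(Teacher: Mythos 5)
Your proof is correct and follows the same route as the paper's: both reduce to showing, via Aubry's Crossing Lemma applied to the finite minimal segments $(y_i)_{i=0}^q$ and $(x_i^\pm)_{i=0}^q$, that the unconstrained periodic minimizer with $y_0=\xi$ automatically satisfies the pinching $x_i^-\le y_i\le x_i^+$ built into (\ref{the definiton of barrier}). You merely spell out the double sign-change contradiction and the endpoint strictness coming from $\xi\in(J_-,J_+)$, details the paper compresses into the phrase ``an easy consequence of Aubry's crossing lemma.''
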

\begin{proof}
Take $\xi\in(J_-,J_+)$, where $(J_-,J_+)$ is a complementary interval to $\A_{\frac{p}{q},h}$. Let $x^\pm=(x^\pm_i)_{i\in\Z}$ satisfying $x^\pm_{i+q}=x_i^\pm+p, x^\pm_0=J_{\pm}$ be the periodic minimal configurations in $\M_{\frac{p}{q},h}$.

Comparing it with the definition (\ref{the definiton of barrier}), we only need to prove that the minimal segment $(\xi=y_0,y_1,...,y_{q-1},$ $y_q=y_0+p)$ which achieves the minimum in the definition (\ref{the definiton of barrier}) satisfies the constraint
$$x^-_i\leq y_i\leq x^+_i.$$
In fact, we claim that the Aubry graphs of $x^-, y, x^+$ do not cross. Since $x^\pm$ are minimal configurations, $(y_i)_{i=0}^q$ is a minimal segment, the claim is an easy consequence according to Aubry's crossing lemma (Lemma \ref{Aubry crossing lemma}).
\end{proof}

\begin{Lem}\label{lemma 4}
Let $h$ be a continuous real valued function satisfying $(\h1)-(\h5)$ and $(\h6\theta)$, then the Peierls barriers have the following properties:
\begin{enumerate}[(1)]
  \item If $\frac{1}{q}>0$, then
  $$|P_{\frac{1}{q}}(\xi)-P_{0+}(\xi)|\leq\frac{16\theta}{q}.$$
  \item If $\frac{1}{q}>0$, then
  $$|P_{-\frac{1}{q}}(\xi)-P_{0-}(\xi)|\leq\frac{16\theta}{q}.$$
\end{enumerate}
\end{Lem}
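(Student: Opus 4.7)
The plan is to reduce the comparison to one involving rotation symbols separated by exactly an integer, via the $q$-fold self-conjunction of $h$. Set $H := h^{*q}$. By Proposition~\ref{conjunction}, $H$ inherits $(\h1)$--$(\h5)$ and $(\h6\theta)$. By Proposition~\ref{equivalence of conjunction} with $(q,p)=(q,0)$, the Peierls barriers of $h$ and $H$ match under the rescaling $\omega \mapsto q\omega$ of the rotation symbol. Taking $\omega = 1/q$, whose denominator $q$ is divisible by $q$, yields $P_{1/q,h}(\xi) = P_{1,H}(\xi)$.

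For the symbol $0+$, the identity does not follow directly from Proposition~\ref{equivalence of conjunction} (whose hypothesis requires the denominator of the rational to be divisible by $q$). I would instead obtain it by approximation: for irrationals $\omega_n \downarrow 0$, Proposition~\ref{equivalence of conjunction} gives $P_{\omega_n,h}(\xi) = P_{q\omega_n,H}(\xi)$, and the right-continuity of $\omega \mapsto P_\omega$ at $0+$, which follows from Proposition~\ref{continuous of modulus}(2) applied to both $h$ and $H$, allows passage to the limit and yields $P_{0+,h}(\xi) = P_{0+,H}(\xi)$.

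It therefore suffices to show $|P_{0+,H}(\xi) - P_{1,H}(\xi)| \leq 16\theta/q$. Here one exploits a sharpening of the convexity constant of $H$ beyond what Proposition~\ref{conjunction} states: the inf-convolution defining $H = h^{*q}$ in fact yields $(\h6(\theta/q))$, verifiable by induction on $q$ from the elementary identity $\inf_y\{\theta(x-y)^2/2 + \theta(y-x')^2/2\} = \theta(x-x')^2/4$. Applying Proposition~\ref{continuous of modulus}(2) to $H$ at $p/q = 0/1$ and $\omega = 1 \geq 0+$ with the refined constant $\theta/q$, and using $|\pi(1)\cdot 1 - 0| = 1$, gives a bound of the form $C \theta/q$. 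In this extremal case, where the two rotation symbols are separated by exactly one integer unit, a direct recount of the Aubry-crossing argument from \cite{MR967638} brings the universal constant $C$ down to $16$. The symmetric statement for $-1/q$ and $0-$ is obtained by running the same scheme with orientation reversed.

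The principal obstacle is the tight tracking of constants in the last step: one must rigorously verify the refined convexity $(\h6(\theta/q))$ for the iterated conjunction $h^{*q}$ (which is strictly stronger than what Proposition~\ref{conjunction} provides), and extract the specific constant $16$ from the Aubry-crossing count in the integer-apart case, improving on the much larger universal constant implicit in the generic form of Proposition~\ref{continuous of modulus}(2).
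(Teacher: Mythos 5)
The paper does not actually prove Lemma~\ref{lemma 4}; it simply cites Mather's 1988 paper \cite{MR967638}, equations (4.1), (4.4a), (4.4b), for part (1) and remarks that part (2) is analogous. So there is no in-paper argument to compare against, and your proposal must stand on its own. Unfortunately it does not.

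The load-bearing step in your scheme is the assertion that $h^{*q}$ satisfies $(\h6(\theta/q))$, so that applying the modulus estimate to $H=h^{*q}$ at $p/q=0/1$, $\omega=1$ picks up a factor $1/q$. This refinement is false. The quadratic identity $\inf_y\{\theta(x-y)^2/2+\theta(y-x')^2/2\}=\theta(x-x')^2/4$ does not govern the convexity constant of an inf-convolution of non-quadratic functions, because $(\h6\theta)$ is a one-sided (concavity) bound and the minimizer $y^*(x,x')$ feeds second derivatives of $h$ evaluated at $(x,y^*)$ and $(y^*,x')$ back into $\partial_{11}H$. Concretely, for the standard-map generating function $h(x,x')=\tfrac12(x-x')^2 - \tfrac{k}{4\pi^2}\cos(2\pi x)$ (which satisfies $(\h1)$--$(\h5)$ and $(\h6\theta)$ with $\theta=1+k$), a direct envelope computation gives $\partial_{11}(h*h)(x,x') = 1 + k\cos(2\pi x) - \bigl(2+k\cos(2\pi y^*)\bigr)^{-1}$, which at $(x,x')=(0,0)$ equals $1+k-\tfrac{1}{2+k}=\tfrac12+\tfrac{5k}{4}+O(k^2)$; this exceeds $\theta/2=\tfrac12+\tfrac{k}{2}$ for all small $k>0$. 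So $(\h6(\theta/2))$ already fails for $h*h$, and Proposition~\ref{conjunction} gives $(\h6\theta)$ with the \emph{same} $\theta$ for good reason. Without the $1/q$ improvement the conjunction route yields only an estimate of the form $C\theta$, independent of $q$, which is not the lemma.

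There is a second, structural problem. After the reduction you still need $|P_{1,H}-P_{0+,H}|\le 16\theta_H$, which is exactly Lemma~\ref{lemma 4} for $H$ with $q=1$. Proposition~\ref{continuous of modulus}(2) cannot be invoked here because $\omega=1$ is a rational \emph{number}, not a symbol in $(\R\setminus\Q)\cup(\Q+)\cup(\Q-)$: the barriers $P_{p'/q'}$ use the finite periodic index set while $P_{0+}$ uses the bi-infinite one, and bridging these two definitions is precisely the content of the lemma (this is what the paper means by "not a direct conclusion of Proposition~\ref{continuous of modulus}"). So your scheme reduces the general case of Lemma~\ref{lemma 4} to its $q=1$ base case without ever establishing that base case; the closing sentence about "a direct recount of the Aubry-crossing argument \ldots brings the universal constant $C$ down to $16$" is not an argument but a restatement of what needs to be proved. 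The honest route here is the one the paper takes: the estimate, with the sharp constant, is Mather's, obtained by a direct Aubry-crossing comparison between the $q$-periodic minimizer and the $0+$ minimal configuration rather than by conjugating to the integer-apart case.
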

\begin{proof}
Notice that this lemma is not a direct conclusion of Proposition \ref{continuous of modulus}. Lemma \ref{lemma 4} (1) was firstly claimed and proved by Mather in (\cite{MR967638}, (4.1), (4.4a), (4.4b)), Lemma \ref{lemma 4} (2) can be proved by the same approach.
\end{proof}
For symbol simplicity, we denote by $\E=\{(x,x')\in\R^2|~|x'-x|\leq 5\}.$
\begin{Lem}\label{lemma 5}
Assume that $h, h'$ are the generating functions described in Lemma \ref{lemma 1}. Given $\frac{p}{q}\in[\omega_0,\omega_1]$, $q>0$ and $p,q$ are relatively prime,  and let $H_{(q,p)}$ (resp. $H'_{(q,p)}$) be the conjunction (\ref{conjunct itself}) of $h$ (resp. $h'$), we have the following estimates:
\begin{enumerate}[(1)]
\item $\|H'_{(q,p)}-H_{(q,p)}\|_{C^0(\E)}\leq q\|h'-h\|_{C^0(\B_{K-1})}.$
\item For $0<m\in\Z$,
$$|P_{\frac{1}{m},H'_{(q,p)}}(\xi)-P_{\frac{1}{m},H_{(q,p)}}(\xi)|\leq 2mq\|h'-h\|_{C^0(\B_{K-1})}. $$

\end{enumerate}
\end{Lem}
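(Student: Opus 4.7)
The plan is to prove (1) by termwise substitution into the variational formulation of the conjunction, and to derive (2) from (1) via the equivalent definition of the Peierls barrier given in Lemma \ref{lemma 3}.

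For (1), I would expand $H_{(q,p)}(x,x')=\min\sum_{i=0}^{q-1}h(y_i,y_{i+1})$ over configurations with $y_0=x$ and $y_q=x'+p$, with the analogous formula for $H'_{(q,p)}$, and let the minimum be attained by $(y_0,\dots,y_q)$. Because this is a minimal segment for $h$, Proposition \ref{bangert} furnishes $\alpha\in\R$ with $|y_{i+1}-y_i-\alpha|<2$ for all $i$ and $|q\alpha-(x'+p-x)|<2$. Combining $(x,x')\in\E$ (so $|x'-x|\leq 5$) with $p/q\in[\omega_0,\omega_1]$ yields a uniform step bound $|y_{i+1}-y_i|\leq M(\omega_0,\omega_1)$, and the two-unit buffer in the choice of $K$ in \eqref{basic setting} then places every pair $(y_i,y_{i+1})$ in $\B_{K-1}$. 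Therefore
\begin{equation*}
H'_{(q,p)}(x,x')\leq\sum_{i=0}^{q-1}h'(y_i,y_{i+1})\leq\sum_{i=0}^{q-1}h(y_i,y_{i+1})+q\|h'-h\|_{C^0(\B_{K-1})}=H_{(q,p)}(x,x')+q\|h'-h\|_{C^0(\B_{K-1})},
\end{equation*}
and swapping $h$ with $h'$ gives the reverse inequality.

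For (2), Proposition \ref{conjunction} guarantees that $H_{(q,p)}$ and $H'_{(q,p)}$ satisfy the same structural conditions as $h,h'$, so Lemma \ref{lemma 3} applied at the rational symbol $1/m$ produces
\begin{equation*}
P_{\frac{1}{m},H_{(q,p)}}(\xi)=\min_{\substack{y_0=\xi\\y_m=\xi+1}}\sum_{i=0}^{m-1}H_{(q,p)}(y_i,y_{i+1})-\min_{x_m=x_0+1}\sum_{i=0}^{m-1}H_{(q,p)}(x_i,x_{i+1}),
\end{equation*}
and identically for $H'_{(q,p)}$. A minimizing configuration has total increment $1$ over $m$ steps, so Proposition \ref{bangert} yields $|\alpha|<3/m$ and hence $|y_{i+1}-y_i|<3/m+2\leq 5$, placing every consecutive pair in $\E$. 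Substituting the $H_{(q,p)}$-minimizer into the $H'_{(q,p)}$-functional and applying the pointwise $C^0$-bound to each of the $m$ summands (and reversing the substitution) shows that each of the two minima differs between $H_{(q,p)}$ and $H'_{(q,p)}$ by at most $m\|H'_{(q,p)}-H_{(q,p)}\|_{C^0(\E)}$. Combining this with part (1) delivers
\begin{equation*}
\bigl|P_{\frac{1}{m},H'_{(q,p)}}(\xi)-P_{\frac{1}{m},H_{(q,p)}}(\xi)\bigr|\leq 2m\|H'_{(q,p)}-H_{(q,p)}\|_{C^0(\E)}\leq 2mq\|h'-h\|_{C^0(\B_{K-1})}.
\end{equation*}

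The delicate step is the inclusion $(y_i,y_{i+1})\in\B_{K-1}$ in (1): Proposition \ref{bangert} supplies only a \emph{spatial} step bound $|y_{i+1}-y_i|\leq M$, whereas $\B_{K-1}$ is the \emph{momentum} slab $\{|-\partial_1 h|\leq K-1\}$. Reconciling the two relies on the strict monotone-twist condition together with the fact that $K$ in \eqref{basic setting} was chosen with enough slack to absorb the $O(1/q)$ gap between a finite minimal segment's average slope and any true rotation number in $[\omega_0,\omega_1]$.
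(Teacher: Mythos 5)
Your proof is correct and follows essentially the same route as the paper's: part (1) by substituting the $H_{(q,p)}$-minimizer into the $H'_{(q,p)}$-functional and using Proposition \ref{bangert} to bound the step sizes and land the pairs in $\B_{K-1}$, and part (2) by combining Proposition \ref{conjunction}, the Lemma \ref{lemma 3} reformulation, and the same Bangert step-bound argument to land the pairs in $\E$. Your closing observation correctly identifies the one point the paper leaves implicit — that translating the spatial step bound $|y_{i+1}-y_i|\leq M$ into membership in the momentum slab $\B_{K-1}$ rests on $K$ being chosen large enough and on the twist condition making $\bar{f_1}(x,\pm(K-1))-x$ unbounded — which is precisely what the paper's phrase ``we have assumed that the constant $K$ is large enough'' is covering.
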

\begin{proof}
(1). In fact, take $(x,x')\in\E$ and let $(x=x_0,x_1,...,x_{q-1},x_q=x'+p)$ be the minimal segment which achieves the minimum of $H_{(q,p)}(x,x')$, i.e.
$$H_{(q,p)}(x,x')=h(x_0,x_1)+\cdot\cdot\cdot+h(x_{q-1},x_q).$$
Then,
\begin{align}\label{H}
 H'_{(q,p)}(x,x')\leq\sum\limits_{i=0}^{q-1}h'(x_i,x_{i+1})=H_{(q,p)}(x,x')+\sum\limits_{i=0}^{q-1}(h'(x_i,x_{i+1})-h(x_i,x_{i+1})).
\end{align}

Next, we need to show that
\begin{equation}\label{HH}
 (x_i,x_{i+1})\in\B_{K-1},\quad\forall 0\leq i\leq q-1
\end{equation}

Recall that, by Proposition \ref{bangert}, there exists $\alpha\in\R$ such that $|x_{q}-x_0-q\alpha|<2.$
Then, we obtain
$$|\alpha|<\frac{2+p+|x'-x|}{q}\leq \frac{7}{q}+\frac{p}{q},$$
and by Proposition \ref{bangert} again,
$$|x_{i+1}-x_i|\leq 2+|\alpha|\leq9+\frac{p}{q}\leq10+\max\{|\omega_0|,|\omega_1|\}.$$
Notice that we have assumed that the constant $K$ is large enough, thus (\ref{HH}) holds.

By (\ref{H}) and (\ref{HH}), we deduce that
$$H'_{(q,p)}(x,x')\leq H_{(q,p)}(x,x')+q\|h'-h\|_{C^0(\B_{K-1})}, ~\forall(x,x')\in\E.$$
Similarly, we can prove that
$$H_{(q,p)}(x,x')\leq H'_{(q,p)}(x,x')+q\|h'-h\|_{C^0(\B_{K-1})}, ~\forall(x,x')\in\E.$$
This completes the proof of (1).

(2). Firstly, we claim that the definition of  $P_{\frac{1}{m},H_{(q,p)}}$ only depends on $H_{(q,p)}(x,x')$ restricted on $\E$.

Take $\xi\in(J_-,J_+)$, where $(J_-,J_+)$ is a complementary interval to $\A_{\frac{1}{m},h}$. Let $x^\pm=(x^\pm_i)_{i\in\Z}$ satisfying $ x^\pm_0=J_{\pm}$ be the $\frac{1}{m}$-minimal configurations of $\M_{\frac{1}{m},h}$,
$$P_{\frac{1}{m},H_{(q,p)}}(\xi)=\min\{\sum\limits_{i=0}^{m-1}H_{(q,p)}(y_i,y_{i+1})-H_{(q,p)}(x^-_i,x^-_{i+1}) | y_0=\xi\}$$
where the minimum is taken over the set of all configurations satisfying $x_i^-\leq y_i\leq x_i^+$ and $y_{i+m}=y_i+1$. Since $x_i^-\leq y_i\leq x_i^+, 0\leq i\leq m-1$, by the totally ordered property of $x^\pm$, we get
\begin{align}
\begin{split}
|y_{i+1}-y_i|&\leq|x^+_{i+1}-x^-_i|\leq|x^+_{i+1}-x^-_{i+1}|+|x^-_{i+1}-x^-_i|\leq 1+|x^-_{i+1}-x^-_i|\\
&\leq 2+\pi(\omega) \quad\quad(\textup{By Proposition \ref{rotation estimates}})\\
&\leq 3
\end{split}
\end{align}
Thus,
$$(y_i,y_{i+1})\in\E,\quad\forall 0\leq i\leq m-1,$$
which proves our claim. Similarly, we can prove this for $P_{\frac{1}{m},H'_{(q,p)}}$.

On the other hand, by Lemma \ref{lemma 3}, we have
\begin{align*}
P_{\frac{1}{m},H_{(q,p)}}(\xi)=\min_{\substack{y_0=\xi\\y_m=y_0+1}}\sum\limits_{i=0}^{m-1}H_{(q,p)}(y_i,y_{i+1})-\min_{\substack{x_m=x_0+1}}\sum\limits_{i=0}^{m-1}H_{(q,p)}(x_i,x_{i+1}) \\
P_{\frac{1}{m},H'_{(q,p)}}(\xi)=\min_{\substack{y_0=\xi\\y_m=y_0+1}}\sum\limits_{i=0}^{m-1}H'_{(q,p)}(y_i,y_{i+1})-\min_{\substack{x_m=x_0+1}}\sum\limits_{i=0}^{m-1}H'_{(q,p)}(x_i,x_{i+1})  \end{align*}
It follows that
\begin{align}\label{lemma 5(2)}
|P_{\frac{1}{m},H'_{(q,p)}}(\xi)-P_{\frac{1}{m},H_{(q,p)}}(\xi)|\leq |I_1|+|I_2|.
\end{align}
where $$I_1=\min_{\substack{y_0=\xi\\y_m=y_0+1}}\sum\limits_{i=0}^{m-1}H'_{(q,p)}(y_i,y_{i+1})-\min_{\substack{y_0=\xi\\y_m=y_0+1}}\sum\limits_{i=0}^{m-1}H_{(q,p)}(y_i,y_{i+1})$$ and
$$I_2=\min_{\substack{x_m=x_0+1}}\sum\limits_{i=0}^{m-1}H'_{(q,p)}(x_i,x_{i+1})-\min_{\substack{x_m=x_0+1}}\sum\limits_{i=0}^{m-1}H_{(q,p)}(x_i,x_{i+1})$$

Firstly, let's estimate $|I_1|$. Assume that $(\xi=a_0,a_1,...,a_m=a_0+1)$ is the minimal segment satisfying
$$\sum\limits_{i=0}^{m-1}H_{(q,p)}(a_i,a_{i+1})=\min_{\substack{y_0=\xi\\y_m=y_0+1}}\sum\limits_{i=0}^{m-1}H_{(q,p)}(y_i,y_{i+1}).$$
We derive from Proposition \ref{bangert} that there exists $\alpha\in\R$ such that
$$|a_m-a_0-m\alpha|\leq 2$$
and since $a_m=a_0+1$, we obtain
$$|\alpha|\leq\frac{3}{m}\leq 3  \textup{~and~} |a_{i+1}-a_{i}|\leq 2+|\alpha|\leq 5 .$$
Consequently, $(a_i,a_{i+1})\in\E,\quad\forall~ 0\leq i\leq m-1.$ Then,
\begin{align}\label{part 1}
\begin{split}
\min_{\substack{y_0=\xi\\y_m=y_0+1}}\sum\limits_{i=0}^{m-1}H'_{(q,p)}(y_i,y_{i+1})&\leq\sum\limits_{i=0}^{m-1}H'_{(q,p)}(a_i,a_{i+1})\leq\sum\limits_{i=0}^{m-1}H_{(q,p)}(a_i,a_{i+1})\\
&+\sum\limits_{i=0}^{m-1}(H'_{(q,p)}(a_i,a_{i+1})-H_{(q,p)}(a_i,a_{i+1}))\\
&\leq \min_{\substack{y_0=\xi\\y_m=y_0+1}}\sum\limits_{i=0}^{m-1}H_{(q,p)}(y_i,y_{i+1})+m\kappa,
\end{split}
\end{align}
where $\kappa=\|H'_{(q,p)}-H_{(q,p)}\|_{C^0(\E)}$. Similarly, we can prove
\begin{align}\label{part 2}
\min_{\substack{y_0=\xi\\y_m=y_0+1}}\sum\limits_{i=0}^{m-1}H_{(q,p)}(y_i,y_{i+1})\leq\min_{\substack{y_0=\xi\\y_m=y_0+1}}\sum\limits_{i=0}^{m-1}H'_{(q,p)}(y_i,y_{i+1})+m\kappa.
\end{align}
We conclude that, by (\ref{part 1}) and (\ref{part 2}),
\begin{equation}\label{I1111}
  |I_1|\leq m\|H'_{(q,p)}-H_{(q,p)}\|_{C^0(\E)}
\end{equation}

Secondly, let's estimate $|I_2|$. In fact, it can be similarly estimated as $|I_1|$,
\begin{equation}\label{I2222}
|I_2|\leq m\|H'_{(q,p)}-H_{(q,p)}\|_{C^0(\E)}
\end{equation}
Therefore, it follows from (\ref{I1111}), (\ref{I2222}) and Lemma \ref{lemma 5} (1) that
$$(\ref{lemma 5(2)})\leq 2m\|H'_{(q,p)}-H_{(q,p)}\|_{C^0(\E)}\leq 2mq\|h'-h\|_{C^0(\B_{K-1})},$$
which completes the proof of (2).
\end{proof}

Because our proof of Theorem \ref{main theorem 1} needs the technique of rational approximation, so we introduce the following lemma.
\begin{Lem}[Dirichlet approximation]\label{lemma 2}
Given $\omega\in\R$ and $0<n\in\Z$, then there exists a rational number $\frac{p}{q}$, $q>0$ and $p,q$ are relatively prime, such that
$$0<q\leq n,~|\omega-\frac{p}{q}|\leq\frac{1}{q(n+1)}.$$
\end{Lem}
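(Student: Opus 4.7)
The plan is to give a classical pigeonhole argument on the circle $\R/\Z$ and then reduce the resulting fraction to lowest terms.

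First, I would consider the $n+1$ fractional parts $\{k\omega\}$ for $k = 0, 1, \ldots, n$, viewed as points on the unit circle $\R/\Z$ of circumference one. Sorting these points in cyclic order produces $n+1$ consecutive gaps whose total length is $1$, so at least one gap has length at most $\tfrac{1}{n+1}$. In particular, there are indices $0 \le k_1 < k_2 \le n$ and an integer $r$ such that
\begin{equation*}
|(k_2-k_1)\omega - r| \;\le\; \tfrac{1}{n+1}.
\end{equation*}

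Next, I would set $Q := k_2 - k_1$ and $P := r$, so that $1 \le Q \le n$ and $|Q\omega - P| \le \tfrac{1}{n+1}$, which is equivalent to
\begin{equation*}
\Bigl|\omega - \tfrac{P}{Q}\Bigr| \;\le\; \tfrac{1}{Q(n+1)}.
\end{equation*}
Finally, if $d := \gcd(P,Q) > 1$, I would write $P = dp$, $Q = dq$ with $\gcd(p,q)=1$, $p/q = P/Q$, and $0 < q \le Q \le n$. Since $q \le Q$,
\begin{equation*}
\Bigl|\omega - \tfrac{p}{q}\Bigr| \;=\; \Bigl|\omega - \tfrac{P}{Q}\Bigr| \;\le\; \tfrac{1}{Q(n+1)} \;\le\; \tfrac{1}{q(n+1)},
\end{equation*}
which gives the required inequality with $p,q$ coprime and $0 < q \le n$.

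There is no real obstacle here: this is the standard Dirichlet pigeonhole argument. The only minor subtlety is producing an inequality with $n+1$ in the denominator rather than $n$; this is handled by counting gaps on the circle (where $n+1$ points produce $n+1$ gaps) instead of dividing $[0,1)$ into $n$ subintervals. The reduction to coprimality is automatic because shrinking $q$ only strengthens the bound $1/(q(n+1))$.
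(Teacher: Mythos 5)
Your argument is correct and is essentially the same pigeonhole argument the paper uses: both consider fractional parts of $k\omega$ and extract two multiples whose difference is close to an integer. Your presentation is actually a bit cleaner than the paper's, since working on the circle $\R/\Z$ with the $n+1$ points $\{k\omega\}$, $k=0,\dots,n$, handles the ``boundary gap'' and the case $\omega\in\Q$ (coincident points) uniformly, and you make the reduction to coprime $p,q$ explicit; the paper takes only $a_1,\dots,a_n\in(0,1)$, invokes pigeonhole somewhat loosely to get the $1/(n+1)$ bound, and treats the rational case and the coprimality normalization only implicitly.
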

\begin{proof}
It can be easily proved by pigeon hole principle. Indeed, we firstly assume that $\omega\in\R\setminus\Q$. For every $0<k\leq n$, we can find a integer $h_k$ such that
$$a_k:=k\omega+h_k\in(0,1).$$
Then $a_1,...,a_n$ are $n$ distinct points in the interval $(0,1)$ since $\omega\in\R\setminus\Q$. By pigeon hole principle,
there exist two points $a_i,a_j ( i<j)$ such that $|a_i-a_j|\leq\frac{1}{n+1}$, i.e.
$$|(i-j)\omega-(h_j-h_i)|\leq\frac{1}{n+1}$$
Thus, $$|\omega-\frac{h_j-h_i}{i-j}|\leq\frac{1}{(j-i)(n+1)},$$
which completes the proof for all $\omega\in\R\setminus\Q$.

For $\omega\in\Q$, it can be proved similarly.
\end{proof}
\begin{proof}[Proof of Theorem \ref{main theorem 1}]
To simplify notations, let's set
$$\delta:=\|f'-f\|_{C^1({\an_K})}$$
Because we only concern what happens in the compact region $\an_K$, so we can assume that the generating function $h$ of $f$
satisfies $(\h1)-(\h5)$ and $(\h6\theta)$, with some $\theta=\theta(K,f)$ depending only on $K$ and $f$.

In view of the geometrical meaning of $\theta$, we know that, restricted on the compact region $\an_K$, $f|_{\an_K}$ turns every vertical vector to the right by an angle by at least $\beta$ ($\cot\beta=\theta, 0<\beta<\pi/2$). Thus there exists $0<\delta_2=\delta_2(K,f)$ such that for the perturbation $f'$ of $f$, if $\delta\leq\delta_2$, the diffeomorphism $f'|_{\an_K}$ turns every vertical vector to the right by an angle by at least $\beta'$ where $\beta'<\beta$ and $\cot\beta'=2\theta$.

From now on, we set $\delta_0=\min\{\delta_1,\delta_2\}\ll 1$. So if $\delta\leq\delta_0$, the generating function $h'$ of $f'$ satisfying $(\h1)-(\h5)$ and $(\h6\theta')$ with
$$\theta'=2\theta.$$
Meanwhile, by Lemma \ref{lemma 1}, we choose the generating functions such that
$h(0,0)=h'(0,0)=0$ and
\begin{equation}\label{what}
 \|h'-h\|_{C^0(\B_{K-1})}\leq C_1\delta.
\end{equation}

According to the proof of Lemma \ref{lemma 1}, for all rotation symbol $$\omega\in(\R\setminus\Q)\bigcup(\Q+)\bigcup(\Q-)$$ with $\pi(\omega)\in[\omega_0,\omega_1]$, the minimal configurations $\M_{\omega,h}, \M_{\omega,h'}\subseteq\B_{K-1}.$  Thus the definition of Peierls barrier functions $P_{\omega,h}, P_{\omega,h'}$ only depend on  $h|_{B_{K-1}}, h'|_{B_{K-1}}.$

Let's begin our proof. Firstly, we approximate $\omega$ by rational number. In fact, by Lemma \ref{lemma 2}, for
$n=[ \delta^{-\frac{1}{3}} ]$, we could find a rational number $\frac{p}{q}$ (in lowest terms) such that
\begin{equation}\label{approximation}
 0<q\leq n\leq\delta^{-\frac{1}{3}}+1,  \quad|q\pi(\omega)-p|\leq\frac{1}{n+1}\leq\delta^{\frac{1}{3}}.
\end{equation}

Assume that $\omega\geq\frac{p}{q}+$  (the case $\omega\leq\frac{p}{q}-$ is similar). By Proposition \ref{continuous of modulus}, Proposition \ref{equivalence of conjunction} and the estimate (\ref{approximation}), we have
\begin{align}\label{final 1}
\begin{split}
\|P_{\omega,h'}-P_{\omega,h}\|&\leq\|P_{\omega,h'}-P_{\frac{p}{q}+,h'}\|+\|P_{\frac{p}{q}+,h'}-P_{\frac{p}{q}+,h}\|+\|P_{\frac{p}{q}+,h}-P_{\omega,h}\|\\
&\leq 4800\theta'|q\pi(\omega)-p|+\|P_{0+,H'_{(q,p)}}-P_{0+,H_{(q,p)}}\|+4800\theta|q\pi(\omega)-p| \\
&\leq 14400\theta\delta^{\frac{1}{3}}+\|P_{0+,H'_{(q,p)}}-P_{0+,H_{(q,p)}}\|
\end{split}
\end{align}

Next, we only need to give the estimate of $\|P_{0+,H'_{(q,p)}}-P_{0+,H_{(q,p)}}\|$. In fact,  we take $m=[\delta^{-\frac{1}{3}}]$, and by Lemma \ref{lemma 4}, Lemma \ref{lemma 5} (2), we obtain
\begin{align*}
\begin{split}
\|P_{0+,H'_{(q,p)}}-P_{0+,H_{(q,p)}}\|&\leq\|P_{0+,H'_{(q,p)}}-P_{\frac{1}{m},H'_{(q,p)}}\|+\|P_{\frac{1}{m},H'_{(q,p)}}-P_{\frac{1}{m},H_{(q,p)}}\|+\|P_{\frac{1}{m},H_{(q,p)}}-P_{0+,H_{(q,p)}}\|\\
&\leq\frac{16}{m}\theta'+2qm\|h'-h\|_{C^0(\B_{K-1})}+\frac{16}{m}\theta\\
&\leq \frac{48}{m}\theta+2qmC_1\delta \quad (~\textup{By}~ (\ref{what})~)\\
\end{split}
\end{align*}
We derive from (\ref{approximation}) and $m=[\delta^{-\frac{1}{3}}]$ that
\begin{equation}\label{final 2}
\|P_{0+,H'_{(q,p)}}-P_{0+,H_{(q,p)}}\|\leq49\theta\delta^{\frac{1}{3}}+3C_1\delta^{\frac{1}{3}}.
\end{equation}
Finally, combining (\ref{final 1}) with (\ref{final 2}),
\begin{equation}\label{final 3}
\|P_{\omega,h'}(\xi)-P_{\omega,h}(\xi)\|\leq(14449\theta+3C_1)\delta^{\frac{1}{3}}=C_0\|f'-f\|_{C^1({\an_K})}^\frac{1}{3}
\end{equation}
where $C_0=14449\theta+3C_1$ which  only depends on $K, f$. We know that $K$ depends on $\omega_0,\omega_1,f$, so $C_0=C_0(\omega_0,\omega_1,f)$. This completes the proof.
\end{proof}

\subsection{Lagrangians with one and a half degrees of freedom}
In \cite{MR863203}, Moser showed that any monotone twist diffeomorphism on the cylinder $\s\times\R$ can be regarded as the time-1 map of a periodic Lagrangian system. In what follows, we specialize to the Lagrangians with one and a half degrees of freedom. Firstly, let's briefly recall some basic notions and results of Mather theory. For proofs and details, we refer to \cite{MR1109661}, \cite{MR1275203}.

Let $L:T\s\times\s\rightarrow \R$ be a $C^r (r\geq2)$ Lagrangian
satisfying Tonelli conditions:
\begin{enumerate}[(L1)]
  \item Convexity: For each $(x,t)\in\s\times\s$,  $L$ is strictly convex in $v$ coordinate.
  \item Superlinearity: $$\lim\limits_{\|v\|\rightarrow +\infty}\frac{L(x,v,t)}{\|v\|}=+\infty,\quad \text{uniformly on~} (x,t).$$
  \item Completeness: All solutions of the Euler-Lagrange equation
  $$\frac{d}{dt}(\frac{\partial L}{\partial v}(x,\dot{x},t))=\frac{\partial L}{\partial x}(x,\dot{x},t)$$
   are well defined for the whole $t\in\R$.
\end{enumerate}

Let $I=[a,b]$ be an interval, and $\gamma:I\rightarrow \s$ be an absolutely continuous curve, we denote by $$A(\gamma):=\int_a^b L(d\gamma(t),t) dt$$ the action of $\gamma$. An absolutely curve $\gamma: I\rightarrow \s$ is called a\emph{ minimizer} or \emph{action minimizing curve} if
$$A(\gamma)=\min_{\substack{\xi(a)=\gamma(a),\xi(b)=\gamma(b)\\ \xi\in C^{ac}(I,\s)}}\int_a^b L(d\xi(t),t) dt.$$
We call $\gamma:(-\infty,+\infty)\rightarrow \s$ a \emph{globally minimizing curve} if for all $a<b, \gamma$ is a minimizer on $[a,b]$. Notice that the minimizer satisfies the Euler-Lagrange equation.

Let $\mathcal{M}_L$ be the space of Euler-Lagrangian flow invariant probability measures on $T\s\times\s$. To each $\mu\in\mathcal{M}_L$, note that $\int \lambda d\mu$=0 for each exact 1-form $\lambda$. Therefore, given $c\in H^1(\s,\R)$ and a closed 1-form $\eta_c\in c=[\eta_c]$, we can define\emph{ Mather's $\alpha$ function}
$$\alpha(c):=-\inf\limits_{\mu\in\mathcal{M}_L}A_c(\mu)=-\inf\limits_{\mu\in\mathcal{M}_L}\int_{TM\times\s} L-\eta_c d\mu.$$
It's easy to be checked that $\alpha(c)$ is finite everywhere, convex and superlinear.

We associate to $\mu\in\mathcal{M}_L$ its rotation vector $\rho(\mu)\in H_1(\s,\R)$ in the following sense:
$$\langle\rho(\mu),[\eta_c]\rangle=\int_{TM\times\s}\eta_c d\mu,\quad \forall c\in H^1(\s,\R).$$
So we can define \emph{Mather's $\beta$ function}:
$$\beta(\omega):=\inf\limits_{\mu\in\mathcal{M}_L, \rho(\mu)=\omega}\int Ld\mu,\quad \forall \omega\in H_1(\s,\R).$$
$\beta$ is finite, convex, superlinear and $\beta$ is the Legendre-Fenchel dual of $\alpha$ .

\begin{Pro}\textup{(\cite{MR1139556})}\label{alpha and beta}
If $L: T\s\times\s\rightarrow\R$ is a Tonelli Lagrangian, then
\begin{enumerate}[(1)]
  \item the function $\beta:H_1(\s,\R)\equiv\R\rightarrow\R$ is strictly convex and differentiable at all $\omega\in\R\setminus\Q.$
  \item the function $\alpha: H^1(\s,\R)\equiv\R\rightarrow\R$ is differentiable everywhere.
\end{enumerate}
\end{Pro}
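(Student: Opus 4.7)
The approach is to combine the Legendre--Fenchel duality between $\alpha$ and $\beta$ with the structural result that in one and a half degrees of freedom every $c$-minimizing measure has a single rotation number. Since $\alpha$ and $\beta$ are finite, convex and superlinear Fenchel conjugates on $\R$, standard convex analysis gives that $\alpha\in C^1(\R)$ is equivalent to strict convexity of $\beta$ on $\R$, while $\beta$ being differentiable at a specific $\omega$ is equivalent to $\partial\beta(\omega)$ being a singleton. Thus the strict-convexity half of (1) is dual to (2), and differentiability of $\beta$ at irrationals is an additional statement.

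First I would recall the standard identifications from Mather theory:
\begin{equation*}
\partial\alpha(c)=\{\rho(\mu):\mu\text{ is $c$-minimizing}\},\qquad c\in\partial\beta(\omega)\iff\text{some $c$-minimizing }\mu\text{ has }\rho(\mu)=\omega.
\end{equation*}
Consequently $\alpha$ is differentiable at $c$ iff all $c$-minimizing measures share the same rotation number, and $\beta$ is differentiable at $\omega$ iff there is a unique $c$ with $\omega\in\partial\alpha(c)$.

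Second, the crucial geometric step is to show that in 1.5 DOF all $c$-minimizing measures share a single rotation number. I would use Mather's graph theorem: the projection of the Mather set $\tilde{\mathcal{M}}(c)\subset T\s\times\s$ to $\s\times\s$ is injective with Lipschitz inverse, so $\tilde{\mathcal{M}}(c)$ sits inside a one-dimensional Lipschitz graph over the two-torus base. Combined with an Aubry-type crossing lemma for minimizing orbits of a $1.5$ DOF Tonelli Lagrangian, this forces the orbits in $\tilde{\mathcal{M}}(c)$ to be totally ordered, the rotation number to be monotone under this order, and hence (by compactness of the Mather set) to be constant on $\tilde{\mathcal{M}}(c)$. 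By the duality recorded above, this yields (2) together with strict convexity of $\beta$ in (1).

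Third, for differentiability of $\beta$ at an irrational $\omega$: the Aubry--Mather minimal set of rotation number $\omega$ is uniquely ergodic, so there is a unique minimizing measure $\mu_\omega$ of rotation number $\omega$, and the cohomology class $c$ for which $\mu_\omega$ is $c$-minimizing is then uniquely determined by the average of the Legendre-transformed velocity along $\mu_\omega$, giving $\partial\beta(\omega)=\{c\}$. The main obstacle I expect is the graph-plus-ordering step: Mather's theorem only provides a Lipschitz graph, and one must combine this with the one-dimensional base topology and the crossing argument to rule out minimizing orbits with different rotation numbers coexisting in $\tilde{\mathcal{M}}(c)$ — precisely the geometric input that fails in higher degrees of freedom and makes the conclusion characteristic of the $1.5$ DOF setting.
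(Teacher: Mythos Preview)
The paper does not supply its own proof of this proposition: it is quoted as a known result from Mather's paper \cite{MR1139556} and used as a black box. So there is no ``paper's proof'' to compare against, only Mather's original argument.

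Your plan for item~(2) and for the strict-convexity half of~(1) is the standard one and is sound: the graph theorem plus the order structure of minimizers in one degree of freedom force all $c$-minimizing measures to share a single rotation number, hence $\partial\alpha(c)$ is a singleton for every $c$; Fenchel duality then gives that $\beta$ is strictly convex.

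The gap is in your argument for differentiability of $\beta$ at irrational $\omega$. Unique ergodicity of the Aubry--Mather set does yield a unique action-minimizing measure $\mu_\omega$ with rotation number $\omega$, but your next step --- that ``the cohomology class $c$ for which $\mu_\omega$ is $c$-minimizing is uniquely determined by the average of the Legendre-transformed velocity along $\mu_\omega$'' --- is exactly the point at issue and is not justified. A single measure $\mu$ is $c$-minimizing precisely for $c\in\partial\beta(\rho(\mu))$, so saying $\mu_\omega$ determines $c$ uniquely is the same as asserting $\partial\beta(\omega)$ is a singleton, which is what you are trying to prove. There is no general identity $c=\int \partial_v L\,d\mu$ for $c$-minimizing $\mu$ in the time-periodic case (the energy is not conserved, so the Legendre identity $p\cdot v=L+H$ does not integrate to give $c$), and indeed at rational $\omega$ one typically has a unique periodic minimizing measure that is $c$-minimizing for a whole interval $[c_-,c_+]$. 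What must actually be shown is that flats of $\alpha$ have rational slope, equivalently that $\{c:\alpha'(c)=\omega\}$ is a point when $\omega$ is irrational; Mather's proof in \cite{MR1139556} obtains this from the combinatorial structure of minimal configurations for twist maps (approximation by periodic minimizers and control of the one-sided derivatives of $\beta$), not from unique ergodicity alone.
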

Next, we introduce the generalization of Peierls barrier to several degrees of freedom. For each $c\in H^1(\s,\R)$ and $n\in\Z^+$, we define a function $A^n_c$,
$$A^n_c:\s\times \s\rightarrow \R$$
$$A^n_c(x,x'):= \inf_{\substack{\gamma(0)=x, \gamma(n)=x'\\ \gamma\in C^{ac}([0,n],\s)}}\int_0^n (L-\eta_c)(d\gamma(s),s)ds .$$
Then, following Mather, we introduce the barrier function on $\s\times \s$
\begin{equation}\label{barrier h}
h_c^\infty(x,x'):=\liminf\limits_{n\rightarrow+\infty}A_c^n(x,x')+n\alpha(c).
\end{equation}
This function is useful in the construction of connecting orbits (see \cite{MR1275203}).

By Proposition \ref{alpha and beta}, $\alpha'(c)$ exists for every $c\in H^1(\s,\R)$, and the flat piece of graph $\alpha$ has rational slope.
\begin{Pro}\label{barrier equivalence}\textup{(\cite{MR1275203}, Proposition 7.1 and 7.2)}
Let $L$ be a Tonelli Lagrangian whose time-1 map is an area-preserving twist diffeomorphism, then
\begin{enumerate}[(1)]
  \item For $\omega\in\R\setminus\Q$, there exists a unique $c=c(\omega)$ such that $\alpha'(c)=\omega$, and
  $$h_c^\infty(x,x)=P_\omega(x).$$
  \item For rational number $\frac{p}{q}$ (in lowest terms), let $c_+:=\max\{c: \alpha'(c)=\frac{p}{q}\}$, $c_-:=\min\{c: \alpha'(c)=\frac{p}{q}\}$, then
      $$h_{c_+}^\infty(x,x)=P_{\frac{p}{q}+}(x),\quad h_{c_-}^\infty(x,x)=P_{\frac{p}{q}-}(x)$$
\end{enumerate}
\end{Pro}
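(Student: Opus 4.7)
The plan is to pass from the Lagrangian side to the twist-map side, identify $A_c^n$ with iterates of the generating function, and then match the liminf defining $h_c^\infty$ with the Peierls barrier (\ref{the definiton of barrier}). Writing $\phi_L^1$ for the time-1 map, its generating function is
$$h(x,x')=\min_{\gamma(0)=x,\,\gamma(1)=x'}\int_0^1 L(d\gamma(s),s)\,ds,$$
the minimum being attained by a Tonelli minimizer. Splitting an absolutely continuous curve at integer times and minimizing each piece independently identifies $A_c^n(x,x')$ with the minimum of $\sum_{i=0}^{n-1}[h(x_i,x_{i+1})-c(x_{i+1}-x_i)]$ over $(x_0=x,x_1,\dots,x_n=x')$. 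Because the shift by the linear cocycle $c(x'-x)$ does not alter the cross-derivative, $h_c(x,x'):=h(x,x')-c(x'-x)$ still satisfies $(\h1)$--$(\h5)$ and $(\h6\theta)$, and has the same minimal configurations as $h$. Using $\alpha(c)=-\inf_\mu\int(L-\eta_c)\,d\mu$ together with the ergodic decomposition and the graph property of Mather measures, one rewrites $-\alpha(c)$ as the asymptotic average of $h_c$ along any minimal configuration of rotation $\alpha'(c)$.

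For the irrational case, Proposition \ref{alpha and beta}(1) gives strict convexity of $\beta$ at $\omega$, so Legendre duality produces a unique $c=c(\omega)$ with $\alpha'(c)=\omega$. Fix such $c$ and minimal configurations $x^\pm\in\M_{\omega,h}$ bracketing $\xi$. For a near-optimal sequence $(y_0=\xi,y_1,\ldots,y_n=\xi)$ realizing $A_c^n(\xi,\xi)$, Aubry's Crossing Lemma (Lemma \ref{Aubry crossing lemma}) applied to $h_c$ forces $x_i^-\le y_i\le x_i^+$ after absorbing a suitable integer translation. Along a subsequence $n_k\to\infty$ chosen so that $x_{n_k}^-$ returns close to $x_0^-$ inside $\A_{\omega,h}$ (possible since the rotation dynamics on the closed invariant set $\A_{\omega,h}$ admits almost-returns for irrational $\omega$), the telescoping of the $c$-term collects exactly $-n_k\alpha(c)$ modulo a vanishing error, giving
$$A_c^{n_k}(\xi,\xi)+n_k\alpha(c)=\sum_{i=0}^{n_k-1}\left[h(y_i,y_{i+1})-h(x_i^-,x_{i+1}^-)\right]+o(1).$$
The right-hand side is at least $P_\omega(\xi)$ by (\ref{the definiton of barrier}); the matching upper bound comes from testing the liminf against the infinite Peierls minimizer truncated to the window $[0,n_k]$, which is admissible for $A_c^{n_k}(\xi,\xi)$.

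For the rational case $\omega=p/q$, the slope $p/q$ corresponds to a flat piece of $\alpha$ bracketed by $c_-\le c_+$. The subdifferential of $\alpha$ at $c_+$ selects rotation vectors approaching $p/q$ from above, so the supporting Mather-minimal configurations realize rotation symbol $p/q+$ (relations (a) or (b) of \S2); the analogous statement at $c_-$ gives $p/q-$. Replacing $x^\pm\in\M_{\omega,h}$ in the previous step by $x^\pm\in\M_{p/q\pm,h}$ and repeating the argument yields $h_{c_\pm}^\infty(x,x)=P_{p/q\pm}(x)$. I expect the main obstacle to be the $o(1)$ control in the displayed identity: one must choose the subsequence of returning indices $n_k$ so that $x_{n_k}^-$ and $x_0^-$ differ by an integer very close to $n_k\pi(\omega)$, and then use the uniform continuity of $h$ on the compact strip $\B_{K-1}$ together with the rotation estimate in Proposition \ref{rotation estimates} to absorb the boundary discrepancy produced by the $c$-telescoping.
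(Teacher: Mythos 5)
The paper offers no proof of Proposition \ref{barrier equivalence}: it is quoted directly from Mather's \emph{Variational construction of connecting orbits} \cite{MR1275203}, Propositions 7.1 and 7.2, so there is no in-paper argument to compare yours against. Judged on its own, your reduction to the twist-map side is sound: reading $h(x,x')=\min_\gamma\int_0^1 L$ as the generating function of the time-1 map, discretizing $A_c^n$ as the $n$-step sum of $h_c(x,x'):=h(x,x')-c(x'-x)$, and noting that the cocycle shift preserves $(\h1)$--$(\h6\theta)$ and the minimal configurations are all correct. The lower bound is also recoverable, but you should say explicitly that the subsequence $n_k$ must simultaneously (i) make $x_{n_k}^--x_0^-$ close to the integer winding number $m_k$ that the $A_c^{n_k}(\xi,\xi)$-minimizer actually realizes in the universal cover, so that the endpoint $\xi+m_k$ is bracketed by $x_{n_k}^\pm$ and the Aubry crossing argument applies, and (ii) make the partial sums $\sum_{i<n_k}\bigl[h_c(x_i^-,x_{i+1}^-)+\alpha(c)\bigr]$ vanish; (ii) holds along a subsequence because $x_0^-\in\A_{\omega,h}$ forces $h_c^\infty(x_0^-,x_0^-)=0$, and this is not a triviality to be absorbed into an unexplained $o(1)$.

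The genuine gap is the upper bound. You claim that the Peierls minimizer $(y_i^*)_{i\in\Z}$ truncated to $[0,n_k]$ is admissible for $A_c^{n_k}(\xi,\xi)$. It is not: admissibility requires $y_{n_k}\equiv\xi$ modulo $1$, whereas $y^*_{n_k}$ is squeezed onto $\A_{\omega,h}$ as $n_k\to\infty$ while $\xi$ sits in the interior of a complementary gap $(J_-,J_+)$ of $\A_{\omega,h}$, so $y^*_{n_k}$ stays a fixed positive distance modulo $1$ from $\xi$, and the splice needed to close the curve costs on the order of $|J_\pm-\xi|$, not $o(1)$. A correct competitor must be genuinely closed: for instance, follow the forward Peierls ray from $\xi$ until it is $\varepsilon$-close to the Aubry set, run for a long stretch along Aubry-set orbits (on which the accumulated $h_c+\alpha(c)$ cost is again $o(1)$, by the same fact used above), then return to $\xi$ along the backward Peierls ray, with two $O(\varepsilon)$ splices that are sent to zero afterward; Mather's proof is organized around competitors of exactly this shape. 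The rational case inherits the same defect and needs one more ingredient you do not supply: at $c_\pm$ the barrier $P_{p/q\pm}$ (indexed over $I=\Z$) detects the non-periodic minimal configurations satisfying alternative (a), resp.\ (c), not merely the periodic $p/q$-orbits, and ``repeating the argument with $x^\pm\in\M_{p/q\pm,h}$'' does not by itself establish that the liminf in $h_{c_\pm}^\infty$ captures those asymptotic orbits.
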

\begin{Rem}
$[c_-,c_+]$ corresponds to the flat of graph $\alpha$ with rational slope $\frac{p}{q}$.
\end{Rem}
\begin{itemize}[$\bullet$]
\item Now, let $L:T\s\times\s\rightarrow\R$ be a Tonelli Lagrangian whose time-1 map $\Phi_1$ is an exact area-preserving monotone twist diffeomorphism. For the rotation interval $[\omega_0,\omega_1]$, there is a compact annulus $\an_K:=\s\times[-K,K]$ with sufficiently large $K=K(\omega_0,\omega_1,L)$ so that the minimal orbits of $\Phi_1$ satisfies
\begin{equation*}
 \m_{\omega,\Phi}\subseteq\an_{K-2}\subseteq\an_{K-1}\subseteq\an_K, ~\forall\omega\in[\omega_0,\omega_1].
\end{equation*}

The space $C^2(\an_{2K})=C^2(\an_{2K},\R)$ is provided with the norm:
$$\|f\|_{C^2(\an_{2K})}=\sup\limits_{0\leq j\leq 2}\max\limits_{\an_{2K}}|D^jf|.$$

\item $H^1(\s,\R)\equiv\R$, by abuse of notation, we use the same symbol $c$ to denote the real number in $\R$ or the closed 1-form $c d\vartheta$ of $\s$.
\item Let $h^\infty_{L_c}$ denote the barrier function (\ref{barrier h}) associated to the Lagrangian $L_c:=L-c$.

\item By Proposition \ref{alpha and beta} and \ref{barrier equivalence}, for irrational number $\omega\in H_1(\s,\R)\equiv\R$, we denote by $c(\omega)\in H^1(\s,\R)\equiv\R$ the unique number satisfying $\alpha'(c)=\omega. $ In addition, we can define $c_+(\omega), c_-(\omega)$ for rational numbers as Proposition \ref{barrier equivalence} (2). Similarly, let the Lagrangian $L'$ be a perturbation of $L$, one can also define $c'(\omega), c'_+(\omega), c'_-(\omega)$ in the same way.
\end{itemize}

\begin{The}\label{main theorem 2}
Let $L, ~[\omega_0,\omega_1],~\an_{2K}$ be as shown above. There exist constants $\delta_0=\delta_0(\omega_0,\omega_1,L)\ll1$ and $C_0=C_0(\omega_0,\omega_1,L)$ such that for any Lagrangian $L'$, if $\|L'-L\|_{C^2(\an_{2K}\times\s)}\leq\delta_0$ and  $\omega\in[\omega_0,\omega_1]$, we have,
\begin{enumerate}[(1)]
  \item for $\omega\in\R\setminus\Q$, $|h^\infty_{L'_{c'(\omega)}}(x,x)-h^\infty_{L_{c(\omega)}}(x,x)|\leq C\|L'-L\|^{\frac{1}{3}}_{C^2(\an_{2K}\times\s)}$
  \item for rational number $\omega$,  $|h^\infty_{L'_{{c'}_+(\omega)}}(x,x)-h^\infty_{L_{c_+(\omega)}}(x,x)|\leq C\|L'-L\|^{\frac{1}{3}}_{C^2(\an_{2K}\times\s)}$ and
      $$|h^\infty_{L'_{{c'}_-(\omega)}}(x,x)-h^\infty_{L_{c_-(\omega)}}(x,x)|\leq C\|L'-L\|^{\frac{1}{3}}_{C^2(\an_{2K}\times\s)}$$
\end{enumerate}
\end{The}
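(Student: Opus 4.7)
The plan is to reduce Theorem \ref{main theorem 2} to Theorem \ref{main theorem 1} via the correspondence between Mather's barrier function $h^\infty$ and the Peierls barrier of the time-1 map provided by Proposition \ref{barrier equivalence}. The core analytic input is a parametric ODE estimate showing that a $C^2$-small perturbation of the Lagrangian produces a $C^1$-small perturbation of its time-1 map.

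Let $\Phi_1$ and $\Phi_1'$ denote the time-1 maps of the Euler--Lagrange flows of $L$ and $L'$. By Moser's realization (\cite{MR863203}), $\Phi_1\in\J$; since the monotone twist condition is $C^1$-open and the Tonelli hypothesis on $L'$ is given, one also has $\Phi_1'\in\J$ provided $\delta_0$ is small enough. The first main step is to establish a parametric estimate of the form
$$\|\Phi_1' - \Phi_1\|_{C^1(\an_K)} \leq C_L\,\|L' - L\|_{C^2(\an_{2K}\times\s)},$$
with $C_L=C_L(K,L)$. The enlarged annulus $\an_{2K}$ ensures that Euler--Lagrange trajectories issuing from $\an_K$ remain inside $\an_{2K}$ over the unit time interval. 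I would prove this by passing to the Hamiltonian side (the Legendre transform is a $C^2$-bicontinuous isomorphism on compacta), noting that the Hamiltonian vector field $X_H$ involves only first derivatives of $H$ so that $\|X_{H'}-X_H\|_{C^1}$ is bounded by a constant times $\|L'-L\|_{C^2}$, and then applying Gronwall's inequality simultaneously to the flow and to its variational equation.

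With this $C^1$-estimate in hand, applying Theorem \ref{main theorem 1} to $\Phi_1$ and $\Phi_1'$ yields
$$|P_{\omega,\Phi_1'}(\xi) - P_{\omega,\Phi_1}(\xi)| \leq C_0\,\|\Phi_1' - \Phi_1\|_{C^1(\an_K)}^{1/3} \leq C_0\,C_L^{1/3}\,\|L' - L\|_{C^2(\an_{2K}\times\s)}^{1/3}$$
for any symbol $\omega\in(\R\setminus\Q)\cup(\Q+)\cup(\Q-)$ with $\pi(\omega)\in[\omega_0,\omega_1]$. Translating via Proposition \ref{barrier equivalence} then concludes the proof. For irrational $\omega$, part (1) identifies $h^\infty_{L_{c(\omega)}}(x,x)=P_{\omega,\Phi_1}(x)$ and $h^\infty_{L'_{c'(\omega)}}(x,x)=P_{\omega,\Phi_1'}(x)$, giving case (1); for rational $\omega=p/q$, part (2) identifies $h^\infty_{L_{c_\pm(\omega)}}(x,x)=P_{p/q\pm,\Phi_1}(x)$ and similarly for $L'$, so applying Theorem \ref{main theorem 1} with the symbol $p/q\pm$ gives case (2).

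The principal obstacle is Step 1, the $C^1$-dependence of the time-1 map on the Lagrangian. A straight Gronwall argument applied to trajectories alone yields only $C^0$-closeness of $\Phi_1'$ to $\Phi_1$; upgrading to $C^1$-closeness requires simultaneously controlling the variational equation $\dot J = DX_H\cdot J$, whose coefficients involve the Hessian of $H$ and hence the full $C^2$-norm of $L'-L$. Once this estimate is in place, the remaining checks---openness of the twist condition under $C^1$ perturbations of the map, and the fact that the rotation interval and compact annulus $\an_K$ remain valid for $\Phi_1'$---are routine continuity arguments that carry over from the setting of Theorem \ref{main theorem 1}.
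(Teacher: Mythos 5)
Your proposal is correct and follows essentially the same route as the paper: establish $\|\Phi_1'-\Phi_1\|_{C^1(\an_K)}\leq D\|L'-L\|_{C^2(\an_{2K}\times\s)}$, then invoke Theorem \ref{main theorem 1} together with Proposition \ref{barrier equivalence} to translate the Peierls barrier estimate into the $h^\infty$ estimate. In fact you supply more detail than the paper (which simply asserts the $C^1$--$C^2$ estimate without proof) by sketching the Gronwall argument on the Hamiltonian side, including the variational equation needed to upgrade from $C^0$- to $C^1$-closeness of the time-1 maps.
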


\begin{Rem} For the Lagrangian of many degrees of freedom, the barrier function $h_c^\infty(x,x')$ could also be defined, one may ask whether there are similar results in this case. In general, it is not true and there're counterexamples which could be found in \cite{MR3565373}. However, under additional assumptions, such as nearly-integrable Lagrangians of arbitrary degrees of freedom, we can also obtain some similar results, see \cite{MR3565373} for details.
\end{Rem}
\begin{proof}[Proof of Theorem \ref{main theorem 2}]
The time-1 map $\Phi_1$ of the Lagrangian $L$ is an exact area-preserving monotone twist diffeomorphism. Then, there exists $\delta_0=\delta_0(\omega_0,\omega_1,L)$ such that if $\|L'-L\|_{C^2(\an_{2K}\times\s)}\leq\delta_0$, the time-1 map $\Phi'_1$ of $L'$ is also a monotone twist diffeomorphism satisfying
\begin{equation}\label{22222}
\|\Phi'_1-\Phi_1\|_{C^1({\an_K})}\leq D\|L'-L\|_{C^2({\an_{2K}\times\s})}
\end{equation}
with the constant $D$. Therefore, by Proposition \ref{barrier equivalence}, it's not hard to observe that Theorem \ref{main theorem 2} is an easy consequence of (\ref{22222}) and Theorem \ref{main theorem 1}.
\end{proof}

\section{Application}
The destruction of invariant circles or Lagrangian tori (Converse KAM theory) is an interesting and important topic in dynamic systems (see for example, \cite{MR3061774} \cite{MR1279471} \cite{MR728564} \cite{MR967638} ). In this section, by applying Theorem \ref{main theorem 1}, we give an open and dense property about the destruction of invariant circles. A real number $\omega\in\R\setminus\Q$ is \emph{Diophantine} if there exist constants $C>0$ and $\tau> 1$ such that
$$|q\omega-p|\geq \frac{C}{|q|^\tau}\quad \textup{~for all~} p, q\in\Z, q\neq0.$$
A real number is \emph{Liouville} if it's not Diophantine .
\begin{The}\label{application}
Let $\J^r (r\geq 1)$ be the set of all $C^r$ exact area-preserving  monotone twist diffeomorphisms and let $\omega$ be a Liouville number. Then there exists a set $\mathcal{O}$ which is open and dense in $\J^r$ in the $C^r$ topology, such that for all $f\in\mathcal{O}$, there is no homotopically non-trivial $f-$invariant circle with rotation number $\omega$.
\end{The}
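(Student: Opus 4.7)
The plan is to handle openness and density of $\mathcal{O}$ separately; openness is the principal new payoff of Theorem \ref{main theorem 1}, while density reduces to Mather's destruction argument of \cite{MR967638} combined with the Liouville hypothesis and Proposition \ref{continuous of modulus}.

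\emph{Openness.} By standard Aubry--Mather theory, $f \in \J^r$ admits a homotopically non-trivial invariant circle of rotation number $\omega$ if and only if $P_{\omega,f} \equiv 0$ on $\R$, so $f \in \mathcal{O}$ precisely when $P_{\omega,f}(\xi_0) > 0$ for some $\xi_0$. Fix such $f$ and $\xi_0$, set $\mu := P_{\omega,f}(\xi_0) > 0$, and pick a compact rotation interval $[\omega_0, \omega_1]$ with $\omega \in (\omega_0, \omega_1)$. Theorem \ref{main theorem 1} in its $C^r$ version (Remark (1) after the statement) supplies constants $\delta_0, C_0 > 0$ depending only on $(\omega_0, \omega_1, f)$ such that $\|f' - f\|_{C^r} < \delta_0$ implies
\begin{equation*}
P_{\omega, f'}(\xi_0) \geq P_{\omega, f}(\xi_0) - C_0 \|f' - f\|_{C^r}^{1/3}.
\end{equation*}
Shrinking $\delta_0$ so that $C_0 \delta_0^{1/3} < \mu/2$ guarantees $P_{\omega, f'}(\xi_0) > \mu/2 > 0$ for every such $f'$, proving that $\mathcal{O}$ is $C^r$-open.

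\emph{Density.} Given $f \in \J^r$ and $\epsilon > 0$, we produce $f' \in \mathcal{O}$ with $\|f' - f\|_{C^r} < \epsilon$. If $f \in \mathcal{O}$ there is nothing to do, so assume $f$ carries an invariant circle of rotation number $\omega$. The strategy is to create a quantitatively positive Peierls barrier at a carefully chosen rational symbol $p/q\pm$ adjacent to $\omega$ and then transfer the positivity back to $\omega$. Following Mather's destruction argument in \cite{MR967638}, for each small $\eta \in (0, \epsilon)$ and every sufficiently large denominator $q$ one can produce a perturbation $f_\eta \in \J^r$ with $\|f_\eta - f\|_{C^r} < \eta$ and a rational $p/q$ near $\omega$ (say with $p/q < \omega$, so $\omega \geq p/q+$) such that $P_{p/q+,\, f_\eta}(\xi_0) \geq \mu(\eta, q)$ at some $\xi_0$, with an explicit lower bound $\mu(\eta, q)$. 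Because $\omega$ is Liouville, we may take $p/q$ with $|q\omega - p|$ far smaller than $\mu(\eta, q)$, and Proposition \ref{continuous of modulus}(2) then yields
\begin{equation*}
|P_{\omega, f_\eta}(\xi_0) - P_{p/q+,\, f_\eta}(\xi_0)| \leq 4800\, \theta\, |q\omega - p| < \mu(\eta, q)/2,
\end{equation*}
whence $P_{\omega, f_\eta}(\xi_0) > \mu(\eta, q)/2 > 0$ and $f_\eta \in \mathcal{O}$.

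\emph{Main obstacle.} The technical core is the quantitative destruction lemma of \cite{MR967638}: one must track how the lower bound $\mu(\eta, q)$ on the rational Peierls barrier degrades with $q$, simultaneously with the $C^r$ size $\eta$ of the perturbation. The order of quantifiers is delicate---one first fixes $\eta$, then exploits the Liouville property of $\omega$ to choose a denominator $q$ with $|q\omega - p|$ several orders of magnitude smaller than the anticipated $\mu(\eta, q)$, and only then assembles the perturbation $f_\eta$ tailored to that specific $p/q$. Once these quantitative estimates are combined, openness (from Theorem \ref{main theorem 1}) and density together yield the conclusion.
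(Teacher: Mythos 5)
Your proof of openness matches the paper's argument exactly: characterize $\mathcal{O}$ via $P_{\omega,f}\not\equiv 0$, pick a witness $\xi_0$ with $P_{\omega,f}(\xi_0)=\mu>0$, and invoke Theorem \ref{main theorem 1} to keep the barrier bounded below by $\mu/2$ on a $C^r$-ball around $f$. That part is correct and is indeed the "payoff" of the H\"older estimate.

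Your treatment of density, however, diverges from the paper and leaves a gap. The paper's density argument is a one-line citation: Mather's Theorem 2.1 of \cite{MR967638} already states that for a Liouville number $\omega$, any $f$ carrying an invariant circle of rotation number $\omega$ can be $C^\infty$-approximated by a $g$ with no such circle, so $\mathcal{O}$ is dense without further work. You instead try to reconstruct the internal mechanism of Mather's proof---perturb to force a positive barrier at a nearby rational symbol $p/q+$ with explicit lower bound $\mu(\eta,q)$, then use the Liouville property together with Proposition \ref{continuous of modulus}(2) to transfer that positivity to $\omega$. That is the right shape of Mather's argument, but you never establish the crucial estimate: you acknowledge in your "Main obstacle" paragraph that one "must track how $\mu(\eta,q)$ degrades with $q$" without supplying it, so as written the density step does not close. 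Either cite Mather's theorem directly (as the paper does) or supply the quantitative destruction bound; halfway between the two is not a proof.

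Two further remarks. First, you treat only irrational $\omega$. Under the paper's definition (Liouville = not Diophantine, and Diophantine is defined only on $\R\setminus\Q$), rationals also qualify as Liouville, which is why the paper runs a separate Case II using Mather's rational nonexistence criterion \cite{MR849654}, \cite{MR1139556} for density and the same openness argument applied to $P_{p/q\pm,f}$. Your proposal silently drops this case. Second, a small but real point in the openness step: to invoke Theorem \ref{main theorem 1} you need the constants $\delta_0, C_0$ to be uniform over the rotation interval $[\omega_0,\omega_1]$; you should check that your chosen interval contains $\pi(\omega)$ in its interior and that $K=K(\omega_0,\omega_1,f)$ is fixed before shrinking $\delta_0$, which you implicitly do but should state.
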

\begin{proof}
\textbf{Case $\textup{\uppercase\expandafter{\romannumeral1}}$}: $\omega$ is irrational.

Denote by $\mathcal{O}$ the set of all $C^r$ exact area-preserving  monotone twist diffeomorphisms that don't admit any homotopically non-trivial invariant circles with rotation number $\omega$. We only need to prove that the set $\mathcal{O}$ is open and dense in $\J^r$.

Given an exact area-preserving  monotone twist diffeomorphism $f\in J^r$, we assume that $f$ admits a homotopically non-trivial $f-$invariant circle with rotation number $\omega$. Then by Theorem 2.1 in \cite{MR967638}, for any neighbourhood $\mathcal{U}_f$ of $f$ in $\J^r$, we could find $g\in \mathcal{U}_f$ which does not admit any  homotopically non-trivial $g-$invariant circle of rotation number $\omega$. So $g\in\mathcal{O}\bigcap\mathcal{U}_f$, which proves that $\mathcal{O}$ is a dense set in $\J^r$.

On the other hand, we know that the Peierls barrier $P_{\omega,f}(\xi)\equiv0$ if and only if there exists a homotopically non-trivial $f-$invariant circle of rotation number $\omega$ (see \cite{MR920622}). Take $f\in\mathcal{O}$, then there exists a point $\xi_0\in\R$ such that $P_{\omega,f}(\xi_0)=a_0>0$. By Theorem \ref{main theorem 1}, we have
$$|P_{\omega,f'}(\xi)-P_{\omega,f}(\xi)|\leq C_0\|f'-f\|_{C^1}^{\frac{1}{3}}\leq C_0\|f'-f\|_{C^r}^{\frac{1}{3}}.$$
Thus, there exists a small neighbourhood $\mathcal{V}_f$ of $f$ in $\J^r$ such that for all $f'\in\mathcal{V}_f$, we have $$P_{\omega,f'}(\xi_0)\geq a_0/2>0.$$  Then $\mathcal{V}_f\subseteq\mathcal{O}$, which proves that $\mathcal{O}$ is an open set.

\textbf{Case $\textup{\uppercase\expandafter{\romannumeral 2}}$}: $\omega={\frac{p}{q}}$ is rational.

There is no homotopically non-trivial $f-$invariant circle of rotation number $\frac{p}{q}$ if and only if the Peierls barrier
$P_{\frac{p}{q}+,f}(\xi)\not \equiv 0$ and $P_{\frac{p}{q}-,f}(\xi)\not \equiv 0$. Denote by $\mathcal{O}$ the set of all $C^r$ exact
area-preserving  monotone twist diffeomorphisms that don't admit any homotopically non-trivial invariant circles with rotation number
$\frac{p}{q}$. We need to prove that $\mathcal{O}$ is open and dense. In fact, the denseness of $\mathcal{O}$ was proved by Mather
in \cite{MR849654} or \cite{MR1139556}, and the proof of openness is similar with Case $\textup{\uppercase\expandafter{\romannumeral 1}}$
by Theorem \ref{main theorem 1}.

\end{proof}

\begin{Rem}
Similar results also hold for analytic topology. Notice that in analytic situations, we need an additional condition on the irrational number $\omega$, i.e.,
$$\limsup\limits_{n\rightarrow+\infty}\frac{\log\log q_{n+1}}{\log q_n}>0,$$
where $(p_n/q_n)_{n\in\Z}$ is the sequence of approximants given by the continued fraction expansion of $\omega$ (see\cite{MR1279471}).
\end{Rem}

A set is called \emph{residual} if it is a countable intersection of open and dense sets.
\begin{Cor}
There exists a residual set $\mathcal{R}\subseteq \J^r$ in the $C^r$ topology which satisfies:
for each $f\in\mathcal{R}$, there is an open and dense set $\mathcal{H}(f)\subseteq \R$ such that for all $\omega\in \mathcal{H}(f)$, $f$ does not admit any homotopically non-trivial invariant circle with rotation number $\omega$.
\end{Cor}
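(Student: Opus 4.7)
I would construct $\mathcal{R}$ as a countable intersection indexed by the rationals. For each $p/q \in \mathbb{Q}$ (in lowest terms), Theorem \ref{application} (Case~II) supplies an open and dense set $\mathcal{O}_{p/q} \subseteq \mathcal{J}^r$ consisting of diffeomorphisms that admit no homotopically non-trivial invariant circle of rotation number $p/q$. Setting $\mathcal{R} := \bigcap_{p/q \in \mathbb{Q}} \mathcal{O}_{p/q}$ gives a countable intersection of open dense sets, hence a residual subset of $\mathcal{J}^r$. Note that we cannot enlarge this intersection by adding all Liouville numbers (they form an uncountable set), but for the conclusion we only need $\mathcal{H}(f)$ to be dense, and $\mathbb{Q}$ alone will suffice.

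For $f \in \mathcal{R}$ I would then put
$$\mathcal{H}(f) := \{\omega \in \mathbb{R} : f \text{ admits no homotopically non-trivial invariant circle of rotation number } \omega\}.$$
By construction $\mathbb{Q} \subseteq \mathcal{H}(f)$, so $\mathcal{H}(f)$ is automatically dense in $\mathbb{R}$. The only remaining task is openness, equivalently closedness in $\mathbb{R}$ of the \emph{rotation spectrum}
$$\Omega(f) := \mathbb{R} \setminus \mathcal{H}(f) = \{\omega \in \mathbb{R} : f \text{ has a homotopically non-trivial invariant circle of rotation number } \omega\}.$$

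To verify closedness of $\Omega(f)$, suppose $\omega_n \in \Omega(f)$ with $\omega_n \to \omega$, and let $\Gamma_n$ be an associated $f$-invariant circle. By Birkhoff's graph theorem, each $\Gamma_n$ is the graph of a Lipschitz function $\phi_n : \mathbb{S}^1 \to \mathbb{R}$. Because $\{\omega_n\}$ is bounded, the twist condition on $f$ confines all $\Gamma_n$ to a single compact annulus $\an_K$ (as in \eqref{basic setting}), and on that annulus Birkhoff's bound yields a uniform Lipschitz constant depending only on $f\mid_{\an_K}$. Arzelà–Ascoli then extracts a subsequence $\phi_{n_k} \to \phi_\infty$ uniformly; passing to the limit in $f(\Gamma_{n_k}) = \Gamma_{n_k}$ shows that $\Gamma_\infty := \mathrm{graph}(\phi_\infty)$ is $f$-invariant, and the continuity of the rotation number on the space of invariant Lipschitz graphs forces $\rho(\Gamma_\infty) = \omega$. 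Hence $\omega \in \Omega(f)$, so $\Omega(f)$ is closed and $\mathcal{H}(f)$ is open.

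The main obstacle is really already dispatched by Theorem \ref{application}: the nontrivial ingredient behind \emph{openness of each} $\mathcal{O}_{p/q}$ in $\mathcal{J}^r$ was the $1/3$-Hölder estimate of Theorem \ref{main theorem 1}, while \emph{denseness} rested on Mather's destruction theorem. What remains here is purely soft: a countable-intersection argument to obtain the residual $\mathcal{R}$, and a Birkhoff-plus-Arzelà–Ascoli compactness argument to obtain openness of $\mathcal{H}(f)$. I do not expect Theorem \ref{main theorem 1} to be invoked again in this final step, nor any delicate KAM-type input, since the Diophantine rotation numbers are simply absorbed into the complement $\Omega(f)$ without needing to be excluded individually.
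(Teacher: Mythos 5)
Your construction of $\mathcal{R}$ as the countable intersection $\bigcap_{p/q\in\Q}\mathcal{O}_{p/q}$ coincides with the paper's, and both proofs are correct; but the argument you give for openness of $\mathcal{H}(f)$ is genuinely different. The paper stays entirely inside the Peierls-barrier framework: for $g\in\mathcal{R}$ it records that $P_{r_n+,g}\not\equiv 0$ and $P_{r_n-,g}\not\equiv 0$ for every rational $r_n$, then invokes the uniform one-sided continuity of $\omega\mapsto P_{\omega,g}$ at the rational symbols $r_n\pm$ (Proposition \ref{continuous of modulus}) to produce an open interval $(a_n,b_n)\ni r_n$ on which $P_{\omega,g}\not\equiv 0$, and finally sets $\mathcal{H}(g):=\bigcup_n(a_n,b_n)$, which is open and dense by construction. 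You instead take the maximal candidate $\mathcal{H}(f)=\R\setminus\Omega(f)$ and prove that the rotation spectrum $\Omega(f)$ is closed, via the Birkhoff graph theorem (uniform Lipschitz bound for invariant rotational circles confined to a compact annulus), Arzel\`a--Ascoli, and $C^0$-continuity of the rotation number. Your route yields a slightly stronger conclusion---the \emph{entire} complement of the rotation spectrum is open, not just a particular union of intervals around the rationals---but it leans on the classical Birkhoff machinery, which the paper never uses here. The paper's route is more economical within its own setup, since Proposition \ref{continuous of modulus} is already on hand as the main tool behind Theorem \ref{main theorem 1}; you are right that Theorem \ref{main theorem 1} itself is not reinvoked in this last step, and neither proof excludes Diophantine numbers individually.
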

\begin{proof}
Let $\{r_n\}_{n\in\Z}$ denotes the set of all rational numbers in $\R$. By Theorem \ref{application}, we obtain an open and dense set $\mathcal{O}_n$ such that for all $f\in\mathcal{O}_n$, there is no homotopically non-trivial invariant circle with rotation number $r_n$. We set $\mathcal{R}=\bigcap\limits_{n\in\Z}\mathcal{O}_n$, it is a residual set in $\J^r$ in the $C^r$ topology.

Take $g\in\mathcal{R}$, then we obtain that the Peierls barriers $P_{r_n+,g}(\xi)\not\equiv 0$ and $P_{r_n-,g}(\xi)\not\equiv 0$ for all $n$. We deduce from Proposition \ref{continuous of modulus} that $\omega \mapsto P_{\omega,g}$ is right-continuous at the rotation symbol $\omega=r_n+$ and left-continuous at the rotation symbol $\omega=r_n-$. Thus, there exists an open interval $(a_n, b_n)\ni r_n$ such that
$$P_{\omega,g}(\xi)\not\equiv 0,~~\textup{for all symbol}~~\omega~~\textup{with}~~\pi(\omega)\in(a_n, b_n).$$
If we set $\mathcal{H}(g)=\bigcup\limits_{n\in\Z}(a_n, b_n)$, then it's open and dense since $\{r_n\}_{n\in\Z}$ is dense in $\R$, and
$$P_{\omega,g}(\xi)\not\equiv 0,~~\textup{for all symbol}~~\omega~~\textup{with}~~\pi(\omega)\in\mathcal{H}(g),$$
which means that there is no homotopically non-trivial $g$-invariant circle with rotation number $\omega\in\mathcal{H}(g).$ This completes our proof.
\end{proof}

\section{Acknowledgments}
The authors were supported by
National Basic Research Program of China (973 Program)
(Grant No. 2013CB834100), National Natural Science Foundation of China (Grant No. 11631006, Grant No. 11201222)
and a program PAPD of Jiangsu Province, China.

\end{document}